\patchcmd{\subsection}{-.5em}{.5em}{}{}
\patchcmd{\subsubsection}{-.5em}{.5em}{}{}
\numberwithin{equation}{section}
\newcommand{\cA}{\mathcal{A}}
\newcommand{\cH}{\mathcal{H}}
\newcommand{\cR}{\mathcal{R}}
\newcommand{\bC}{\mathbb{C}}
\newcommand{\bE}{\mathbb{E}}
\newcommand{\bR}{\mathbb{R}}
\newcommand{\bZ}{\mathbb{Z}}
\newcommand{\ra}{\rightarrow}
\newcommand{\qor}{\quad \textrm{or} \quad}
\newcommand{\qand}{\quad \textrm{and} \quad}
\def\acts{\curvearrowright}
\newcommand\subsetsim{\mathrel{%
\ooalign{\raise0.2ex\hbox{$\subset$}\cr\hidewidth\raise-0.8ex\hbox{\scalebox{0.9}{$\sim$}}\hidewidth\cr}}}
\newcommand{\eps}{\varepsilon}
\DeclareMathOperator{\supp}{supp}
\DeclareMathOperator{\Dom}{Dom}
\DeclareMathOperator{\shape}{shape}
\DeclareMathOperator{\id}{id}
\theoremstyle{theorem}
\newtheorem{theorem}{Theorem}[section]
\newtheorem{corollary}[theorem]{Corollary}
\newtheorem{proposition}[theorem]{Proposition}
\newtheorem{lemma}[theorem]{Lemma}
\theoremstyle{definition}
\newtheorem{remark}[theorem]{Remark}
\newtheorem*{conjecture}{Conjecture}
\tikzstyle{decision} = [diamond, draw, fill=blue!20, 
\tikzstyle{block} = [rectangle, draw, fill=blue!20, 
\tikzstyle{line} = [draw, -latex']
\tikzstyle{cloud} = [draw, ellipse,fill=red!20, node distance=3cm,
\begin{document}

\title{Bernoulli actions of amenable groups with weakly mixing Maharam extensions}

\author{Michael Bj\"orklund}
\address{Department of Mathematics, Chalmers, Gothenburg, Sweden}
\email{micbjo@chalmers.se}

\author{Zemer Kosloff}
\address{Einstein Institute of Mathematics, Hebrew University of Jerusalem}
\email{zemer.kosloff@mail.huji.ac.il}

\vspace{0.5cm}

\maketitle

\begin{abstract}
We provide a simple criterion for a non-singular and conservative Bernouilli action to have a 
weakly mixing Maharam extension. As an application, we show that every countable amenable group $G$ 
admits a stable type $III_1$ Bernoulli action $G \acts (\{0,1\}^G,\mu)$, answering a 
recent question by Vaes and Wahl. 
\end{abstract}

\section{Introduction and the statements of the main results}

\subsection{The Maharam extension}

Let $G$ be a countable group and let $(X,\mu)$ be a Borel $G$-space, that is to say, a Borel probability measure space $(X,\mu)$ endowed with an action of $G$ by measure-class preserving measurable maps. It is well-known that there exists a $\mu$-conull $G$-invariant 
subset $X' \subset X$ such that the \emph{Radon-Nikodym cocycle}
\[
r_\mu(g,x) = -\log \frac{dg^{-1}\mu}{d\mu}(x),
\]
is defined for all $(g,x) \in G \times X'$. The \emph{Maharam extension} $G \acts (\widetilde{X},\widetilde{\mu})$ associated to 
$(X,\mu)$ is the $G$-action on the set $\widetilde{X} = X' \times \bR$ defined by
\[
g(x,t) = (gx,t+r_\mu(g,x)), \quad \textrm{for $(x,t) \in X' \times \bR$},
\]
which can be readily checked to preserve the \emph{infinite} measure $\widetilde{\mu} = \mu \otimes e^t \, dt$. 

We say that the Maharam extension of $(X,\mu)$ is \emph{weakly mixing} if for every probability measure-preserving ergodic $G$-space $(Y,\nu)$, the diagonal action $G \acts (\widetilde{X} \times Y,\widetilde{\mu} \otimes \nu)$ is ergodic, or equivalently, if the Maharam extension
of $G \acts (X \times Y,\mu \otimes \nu)$ is ergodic. It is not hard to see that if the Maharam extension of $(X,\mu)$ is ergodic, then there is no $G$-invariant $\sigma$-finite measure equivalent to $\mu$. Borel $G$-spaces with weakly mixing Maharam extensions are often
referred to as being of \emph{stable type $III_1$} in the literature. 

\subsection{Non-singular Bernoulli actions}

We note that $G \acts \{0,1\}^G$ by $(gx)_h = x_{g^{-1}h}$ for $g,h \in G$ and $x \in \{0,1\}^G$. Let $(\mu_g)$ be a family of 
probability measures on $\{0,1\}$ indexed by $g \in G$, with the property that there exists $\delta > 0$ such that $\delta \leq \mu_g(0) \leq 1 - \delta$ for all $g \in G$. A classical result of Kakutani \cite[Corollary 1]{K48} tells us that 
the product measure $\mu = \prod_{g \in G} \mu_g$ is non-singular, i.e. its measure-class is preserved by the $G$-action (and  thus $(\{0,1\}^G,\mu)$ is a Borel $G$-space), if and only if
\begin{equation}
\label{kakutani}
\sum_{h \in G} \big(\mu_{gh}(0) - \mu_h(0)\big)^2 < \infty, \quad \textrm{for all $g \in G$}.
\end{equation}
If \eqref{kakutani} holds, we say that $G \acts (\{0,1\}^G,\mu)$ is a \emph{non-singular Bernoulli action}. \\

In this paper we shall primarily be interested in the case when the action is conservative, ergodic and there is no $G$-invariant $\sigma$-finite measure equivalent to $\mu$. Such systems are often said to be of of type $\mathrm{III}$, and the first examples of type $III$-Bernoulli actions for $G = (\bZ,+)$ were constructed by Hamachi in \cite{H81}. As it turns out, ergodic type $III$-actions of any group decompose
further into a one-parameter family of orbit equivalence classes $III_\lambda$, where $0 \leq \lambda \leq 1$, and being of type $III_1$ is
equivalent to having an \emph{ergodic} Maharam extension (see for instance \cite{KW91} for more details). Ulrich Krengel and Benjamin Weiss asked early on: 
\begin{center}
\textit{Which $III_\lambda$-types can occur among non-singular Bernoulli actions?}
\end{center}  
Despite the classical flair of this question, the first examples of non-singular Bernoulli actions of $G = (\bZ,+)$ with \emph{ergodic} Maharam extensions, and thus type $III_1$, were constructed by the second author \cite{Ko11} as late as 2009. A few years later, the second author \cite{Ko14} further showed that for a large class of Bernoulli actions of $G = (\bZ,+)$, being of type $III$ \emph{implies} that the Maharam 
extensions are weakly mixing (in fact, have the $K$-property). In particular, this shows that within this class of examples, only type $III_1$
is possible. Danilenko and Lema\'{n}czyk extended this in \cite{DL18} to a more general class of Bernoulli actions. However, all of these examples are rather special to the additive group of integers as they rely on the notion of a "past" in the group. \\

More recently, Vaes and Wahl in \cite{VW18} studied the question above for general countable groups. In their paper, they formulate
and make substantial progress on, the following conjecture:
\begin{conjecture}\cite{VW18}
	Let $G$ be a countable group with non-vanishing first $\ell^2$-cohomology, or equivalently, suppose that there is a function $f : G \ra \bC$ with $f(0) = 0$, which is not in $\ell^2(G)$, but satisfies
	\begin{equation}
	\label{dirichlet}
	\sum_{h \in G} \big(f(gh)-f(h)\big)^2 < \infty, \quad \textrm{for all $g \in G$}.
	\end{equation}
	Then there is a non-singular Bernoulli action $G \acts (\{0,1\}^G,\mu)$ with an ergodic (or even weakly mixing) Maharam extension. 
\end{conjecture}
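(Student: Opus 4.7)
The natural strategy is to manufacture the product measure $\mu = \prod_g \mu_g$ directly from the Dirichlet function $f$. After passing to the real part we may assume $f$ is real-valued. Pick a smooth Lipschitz function $\psi\colon \bR \to [\delta, 1-\delta]$ with $\psi(0) = 1/2$ — for concreteness, $\psi(t) = \tfrac{1}{2} + \tfrac{1}{4}\tanh(\eps t)$ for some small $\eps > 0$ — and set $\mu_g(0) = \psi(f(g))$. The Lipschitz bound combined with \eqref{dirichlet} immediately gives
\[
\sum_{h \in G} \bigl(\mu_{gh}(0) - \mu_h(0)\bigr)^2 \leq \Lip(\psi)^2 \sum_{h \in G} \bigl(f(gh) - f(h)\bigr)^2 < \infty,
\]
so Kakutani's condition \eqref{kakutani} is satisfied and $G \acts (\{0,1\}^G,\mu)$ is a non-singular Bernoulli action. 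The hypothesis $f \notin \ell^2(G)$ should moreover preclude any equivalent $\sigma$-finite invariant measure: such a measure would correspond to a square-summable coboundary trivialising the Radon--Nikodym cocycle, whereas by construction the latter ``sees'' $f$ up to a uniformly bounded perturbation.

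The central obstacle is to establish that this non-singular action is \emph{conservative}. A natural route is Hopf's criterion, which reduces to showing $\sum_g \frac{dg^{-1}\mu}{d\mu}(x) = \infty$ almost surely. In the $\bZ$-case, \cite{Ko11, Ko14} achieve this by exploiting the totally ordered ``past'' structure and local limit theorems for the associated random walks. For a general amenable $G$, the same idea should be pushable through by averaging along a F\o{}lner sequence and using concentration estimates for the log-Radon--Nikodym cocycle, which with $\mu_g(0) = \psi(f(g))$ reduces to a centred sum of independent bounded random variables indexed by $f$. Beyond the amenable world — and non-amenable groups with non-vanishing first $\ell^2$-cohomology are very much among those the conjecture is aimed at — neither approach applies directly, and a genuinely new conservation argument seems to be required. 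I expect this to be where the bulk of the difficulty of the full conjecture concentrates.

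Once conservativity (and the automatic ergodicity one obtains from a tail $0$--$1$ law for product measures) is in hand, I would invoke the main criterion of this paper to upgrade ergodicity of the Maharam extension to weak mixing. In the construction above, that criterion should amount to a dissipation statement for $r_\mu$ that rules out non-trivial measurable eigenfunctions for the Maharam flow; such dissipation is heuristically plausible because $f \notin \ell^2(G)$ forces the partial sums defining $r_\mu$ to diverge along essentially every orbit, leaving no scale at which a non-trivial periodicity could survive.
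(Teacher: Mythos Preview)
The statement you are attempting is a \emph{conjecture}; the paper does not prove it and does not claim to. What the paper actually establishes is the amenable case: Theorem~\ref{main} gives a criterion (amenability, conservativity, and the asymptotic conditions~\eqref{asym}) under which the Maharam extension is weakly mixing, and the subsequent Corollary combines this with the Vaes--Wahl construction \cite[Proposition~6.8]{VW18} to conclude that every countable amenable group admits such an action. The full conjecture, covering all groups with non-vanishing first $\ell^2$-cohomology, remains open.

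Your sketch correctly flags conservativity outside the amenable world as the principal unresolved obstacle. But your plan to ``invoke the main criterion of this paper'' once conservativity is secured cannot work as stated. First, Theorem~\ref{main} explicitly assumes $G$ amenable, and more decisively, Proposition~\ref{nonamen_nonexist} in the appendix shows that for finitely generated \emph{non}-amenable $G$, any non-singular Bernoulli action with $\mu_g(0) \to \lambda(0)$ is automatically equivalent to the invariant product $\lambda^G$ --- so the very hypotheses~\eqref{asym} of the criterion force amenability and are unavailable precisely where you want to use them. Second, even for amenable $G$ your construction $\mu_g(0) = \psi(f(g))$ need not satisfy $\lim_g \mu_g(0) = \lambda(0)$: nothing in the Dirichlet condition forces $f(g)$ to converge. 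Worse, your heuristic that $f \notin \ell^2(G)$ alone rules out an equivalent invariant measure is false: if $f$ differs from a nonzero constant by an $\ell^2$ function (which is allowed by the hypotheses as literally stated), then $\mu_g(0) - \psi(c)$ is square-summable and $\mu$ is equivalent to an invariant product. This is exactly why the paper does not build $\mu$ directly from an arbitrary Dirichlet function but instead imports the carefully engineered function $F$ of \cite[Proposition~6.8]{VW18}, which is designed to satisfy both parts of~\eqref{asym} together with the conservativity estimate. Finally, the paper's criterion is not the ``dissipation~/~no eigenfunctions'' statement you describe; it is an essential-values argument for the Radon--Nikodym cocycle, carried out via the homoclinic relation and a CLT (Lemmas~\ref{lemma_gibbs1}--\ref{lemma_cons}).
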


If $G$ does not admit a function as above, for instance, if $G$ has property (T), then they show \cite[Theorem 3.1]{VW18} that every non-singular and conservative Bernoulli action of $G$ is equivalent to a probability-measure preserving one, so in particular it cannot have an ergodic Maharam extension. On the other hand, amenable groups do admit plenty of such functions, and in this case they show:

\begin{theorem}\cite[Theorem 6.1]{VW18}
	\label{VWthm}
	Let $G$ be a countable amenable group, and suppose that either:
	\begin{itemize}
		\item $G$ has an infinite order element, or
		\item $G$ admits an infinite subgroup of infinite index. 
	\end{itemize}
	Then there is a non-singular Bernoulli action $G \acts (\{0,1\}^G,\mu)$ with a weakly mixing Maharam extension.
\end{theorem}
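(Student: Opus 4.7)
The plan is, in both cases, to construct the Bernoulli marginals $\mu_g(0) = 1/2 + \alpha_g$ from a suitable real-valued function on $G$, and to use amenability to verify both conservativity and weak mixing of the Maharam extension by Følner-set arguments. First I would produce a bounded function $f : G \to \bR$ satisfying the Dirichlet condition \eqref{dirichlet} with $\sum_g |f(g)|^2 = \infty$ (to drive type $\mathrm{III}$), and then set $\alpha_g = c \cdot f(g)$ for a small $c > 0$. In Case 1, put $H = \langle g_0 \rangle \cong \bZ$ with a transversal $T$, and let $f$ be constant on each right coset of $H$ with its $H$-dependence given by a Kosloff-type sequence \cite{Ko11,Ko14}; the Dirichlet condition for $G$ then follows from that for $\bZ$, summed over the finitely many cosets that can meet any given translate. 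In Case 2, a function $f$ with the required properties can be produced by telescoping suitably scaled indicators of a Følner sequence inside the given infinite subgroup $H$ of infinite index and extending trivially to the cosets; this is the standard witness to non-vanishing first $\ell^2$-cohomology of an infinite amenable group.

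Non-singularity is then immediate from \eqref{kakutani}, and smallness of $c$ keeps $\mu_g(0)$ uniformly bounded away from $0$ and $1$. Conservativity of the $G$-action would follow from a Hopf-type argument once one shows that the partial sums of $\log(dg^{-1}\mu/d\mu)$ along a Følner exhaustion of $G$ remain $O(1)$ on a set of positive measure; thanks to the independence structure of the Bernoulli product this reduces to a variance estimate which is essentially a shifted Dirichlet sum for $f$ and is finite by construction.

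The main step is weak mixing of the Maharam extension. Given an ergodic p.m.p.\ $G$-space $(Y,\nu)$, I must prove that $G \acts (\widetilde{X} \times Y, \widetilde{\mu}\otimes\nu)$ is ergodic. The strategy is to establish a central limit theorem for the Radon-Nikodym cocycle along Følner averages, which will yield equidistribution of the cocycle on all of $\bR$, and to combine this with the approximate independence of the Bernoulli coordinates along Følner sets to show that every $G$-invariant $L^\infty$-function on $\widetilde{X} \times Y$ is a lift from $Y$; ergodicity of $(Y,\nu)$ then finishes the argument. The hardest part will be this CLT for the Radon-Nikodym cocycle along Følner sets: in the absence of a natural ordering on $G$ one cannot invoke the one-sided martingale/past-filtration machinery used for $\bZ$ in \cite{Ko14,DL18}, and one must instead work with a quasi-tiling of a Følner exhaustion (à la Ornstein-Weiss) to decouple the cocycle into approximately independent blocks, then apply a Lindeberg-type theorem. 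Checking that the Følner-adapted filtration really does produce the $K$-property (or at least enough mixing to rule out non-trivial invariant sets twisted by $Y$) is where the bulk of the analytic work is expected.
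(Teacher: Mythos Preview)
This theorem is quoted from \cite{VW18} and is not re-proved here; what the present paper does is prove the stronger Theorem~\ref{main}, which yields the conclusion of Theorem~\ref{VWthm} for \emph{every} countable amenable group, with no structural hypothesis on $G$. So the relevant comparison is between your sketch and the method behind Theorem~\ref{main}.

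Your plan is, in spirit, a reconstruction of the original Vaes--Wahl argument: build $f$ using one of the two structural hypotheses, verify conservativity, and then attack weak mixing of the Maharam extension by a CLT for the Radon--Nikodym cocycle along F{\o}lner sets, using Ornstein--Weiss quasi-tilings to manufacture approximate independence in the absence of a one-sided past. You correctly identify this last step as the crux, and it is: in \cite{VW18} this is exactly where the technical notion of ``$\lambda$-inessential'' exhaustions enters, and the two hypotheses on $G$ in Theorem~\ref{VWthm} are there precisely to guarantee such exhaustions exist. Your proposal does not indicate how to make the quasi-tiling CLT rigorous beyond what \cite{VW18} already needed, so at best you would recover their result under the same hypotheses.

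The paper's route is genuinely different and sidesteps the quasi-tiling analysis altogether. After passing (via Kakutani) to an equivalent product measure with $\mu_{g_m}=\lambda$ along a sparse sequence $(g_m)$ (the Danilenko--Lema\'nczyk trick), one works not with the orbit cocycle $r_\mu(g,\cdot)$ directly but with the \emph{homoclinic} (Gibbs) cocycle $c(x,x')=\sum_g\big(\log\tfrac{\mu_g(x_g)}{\lambda(x_g)}-\log\tfrac{\mu_g(x'_g)}{\lambda(x'_g)}\big)$. The elementary maps $\tau_k$ that swap the $g_k$- and $h_k$-coordinates have $\log\tfrac{d\tau_k^{-1}\mu}{d\mu}$ depending only on two coordinates, so the increments $F_k$ are \emph{genuinely} independent, and Lyapunov's CLT applies directly---no tiling, no filtration, no hypothesis on $G$. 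A stopping-time construction then produces $\phi\in[\cH_\mu]$ with $c(\cdot,\phi(\cdot))$ close to any prescribed $t$, and a separate comparison lemma transfers this to $r_\mu$. The upshot is that the independence you were hoping to extract from the group via quasi-tilings is obtained instead from the product structure of the Bernoulli space via coordinate permutations; this is what removes the restrictions on $G$.
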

They further show that if one is willing to replace the set $\{0,1\}$ with a Cantor set $Z$, then every countable amenable group admits
a non-singular Bernoulli action $G \acts (Z^G,\mu)$ with a weakly mixing Maharam extension. They explicitly ask whether the passage to a Cantor set is necessary, or whether one can keep the base $\{0,1\}$ in this generality. One of the aims of this paper is to prove that one always can. 

\subsection{Main results}

Crucial to the approach of Vaes and Wahl is a two-part criterion \cite[Proposition 6.6]{VW18} for when a non-singular
Bernoulli action of a countable amenable group $G$ has a weakly mixing Maharam extension, which involves a simple 
asymptotic condition on the family $(\mu_g)$ on $\{0,1\}$ and a rather technical assumption that $G$ can be exhausted
by so called "$\lambda$-inessential" subsets. The conditions on $G$ in Theorem \ref{VWthm} are there to guarantee that such
an exhaustion is possible. \\

We show in this paper that the latter technical condition is not needed, and that the asymptotic condition on $(\mu_g)$ stated by Vaes and Wahl, together with the conservativity of the resulting Bernoulli action, is enough to guarantee that the Maharam extension is weakly mixing. 
More precisely, we shall show:

\begin{theorem}
\label{main}
Let $G$ be a countable amenable group and suppose that $G \acts (\{0,1\}^G,\mu)$ is a non-singular conservative Bernoulli action 
with the property that there exist $\delta > 0$ such that $\delta \leq \mu_g(0) \leq 1 - \delta$ for all $g \in G$ and a probability
measure $\lambda$ on $\{0,1\}$ such that
\begin{equation}
\label{asym}
\lim_{g \ra \infty} \mu_g(0) = \lambda(0) \qand \sum_{g \in G} \big(\mu_g(0) - \lambda(0)\big)^2 = \infty.
\end{equation}
Then the Maharam extension of $G \acts (\{0,1\}^G,\mu)$ is weakly mixing.
\end{theorem}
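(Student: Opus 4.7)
The plan is to adapt the cocycle criterion of Vaes--Wahl \cite[Proposition 6.6]{VW18} so that their $\lambda$-inessential exhaustion hypothesis is replaced by conservativity. Weak mixing of the Maharam extension is equivalent, by Fourier analysis along the $\bR$-fibre of $\widetilde X$, to the following statement: for every ergodic probability-measure-preserving $G$-space $(Y,\nu)$ and every $s \in \bR$, every measurable $F : X \times Y \to \bT$ satisfying the cohomology equation
\[
F(gx,gy) = e^{is\,r_\mu(g,x)} F(x,y), \quad g \in G,
\]
is essentially constant when $s = 0$ and essentially zero when $s \neq 0$. The $s = 0$ case, namely ergodicity of $G \acts (X \times Y, \mu \otimes \nu)$, should follow from a standard $0$--$1$ argument using the product structure of $\mu$, the asymptotic $\mu_g \to \lambda$, and conservativity. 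The heart of the argument is the case $s \neq 0$.

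Assuming towards a contradiction that a non-trivial $F$ exists for some $s \neq 0$, I would approximate $F$ in $L^2(\mu \otimes \nu)$ by a cylinder function $F_K$ depending only on $x|_K$ for some finite $K \subset G$ (and on $y$). Using amenability, pick a F\o lner sequence $(F_n)$ in $G$ and restrict attention to $g \in F_n$ with $gK \cap K = \emptyset$; then under $\mu$, $x|_K$ is independent of $x|_{g^{-1}K}$, and the cohomology equation rigidly couples the phase $e^{is\,r_\mu(g,x)}$ to a ratio of cylinder values. For a Bernoulli action the Radon--Nikodym cocycle has the explicit form
\[
r_\mu(g,x) = \sum_{h \in G} \log\frac{\mu_h(x_h)}{\mu_{g^{-1}h}(x_h)},
\]
a sum of independent random variables in $x$, which is precisely the structure that makes distributional estimates on $r_\mu(g,\cdot)$ accessible.

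The crux is the replacement of the $\lambda$-inessential exhaustion. Conservativity of $G \acts (X,\mu)$ forces, via a Hopf-style dichotomy for non-singular actions, that the cocycle values $\{r_\mu(g,x) : g \in G\}$ recur in $\bR$ in a strong sense along $\mu$-typical $x$. Combined with the divergence $\sum_g (\mu_g(0)-\lambda(0))^2 = \infty$, which via a CLT-type estimate on sums of independent coordinates allows us to drive the variance of $r_\mu$ along suitable sequences in $G$ to infinity, one should be able to produce, inside any F\o lner window, a plentiful supply of elements $g$ simultaneously satisfying $gK \cap K = \emptyset$ and $r_\mu(g,x) \approx t$ on a set of large $\mu$-measure, for any preassigned $t \in \bR$. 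Substituting such $g$'s into the rigidified cohomology equation yields $e^{ist} F \approx F$ for $t$ dense in $\bR$, forcing $F = 0$ and delivering the contradiction.

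The main technical obstacle is exactly this last construction: producing the desired $g$'s so that the approximation $r_\mu(g,x) \approx t$ holds \emph{uniformly} in the cylinder coordinates $x|_K$, and not merely in distribution. In the Vaes--Wahl scheme this uniformity was engineered by the $\lambda$-inessential exhaustion; here it must be extracted from the joint independence structure of $\mu$, the uniform bound $\delta \leq \mu_g(0) \leq 1-\delta$ (which controls each Kakutani--Hellinger coordinate contribution from below and above), and conservativity. Carefully decoupling the $x|_K$-dependence from the cocycle value along F\o lner averages, while maintaining the CLT-type density of $r_\mu(g,\cdot)$ in $\bR$, is the key step that I expect will carry most of the technical weight of the proof.
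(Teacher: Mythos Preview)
Your plan correctly identifies the target (the essential-value/eigenfunction criterion) and the obstacle (uniform control of $r_\mu(g,\cdot)$ independently of the frozen cylinder coordinates), but the mechanism you propose to overcome that obstacle is where the gap lies. You want to produce group elements $g$ with $r_\mu(g,x)\approx t$ on a set of large measure by combining conservativity with a CLT on $r_\mu(g,\cdot)=\sum_h\log\frac{\mu_h(x_h)}{\mu_{g^{-1}h}(x_h)}$. The difficulty is that for a \emph{fixed} $g$ the CLT only tells you that $r_\mu(g,\cdot)$ is approximately Gaussian with a mean and variance you do not get to choose; there is no reason the mean should be near $t$, and ``large variance'' does not give a set of \emph{large} measure near any prescribed value. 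On the other side, conservativity (via conservativity of the Maharam extension) gives you infinitely many $g$ with $r_\mu(g,x)\approx 0$, but passing from $0$ to an arbitrary $t$ is exactly the essential-value statement you are trying to prove, so invoking a ``Hopf-style dichotomy'' here is circular. F{\o}lner sets do not help either, and in fact the paper never uses them.

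The paper's route avoids this by \emph{not} looking for a single $g$ that does the job. Instead it works in the homoclinic (tail) groupoid $[\cH_\mu]$: after a Kakutani-equivalent modification setting $\mu_{g_m}=\lambda$ along a sparse sequence $(g_m)$, one builds a finite-support coordinate-swap map $\phi\in[\cH_\mu]$, using a CLT plus a stopping time, so that the Gibbs cocycle $c(x,\phi(x))=\log\frac{d\phi^{-1}\mu}{d\mu}(x)$ is within $\eps$ of $t$ on a set of measure $\ge 1/3$, disjoint from the cylinder support $K$. A separate computation shows $r_\mu(g,x)-r_\mu(g,\phi(x))\approx c(x,\phi(x))$ once $g\cdot\supp(\phi)$ lies where $\mu_h\approx\lambda$. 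Conservativity is then used only to supply $g$'s with $r_\mu(g,\phi(x))\approx 0$; adding the two pieces gives $r_\mu(g,x)\approx t$. The decoupling ``$t$ comes from the homoclinic map, $0$ comes from conservativity'' is the idea your proposal is missing; without it, the step you flag as carrying most of the technical weight does not go through as stated.
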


\begin{remark}
Conversely, we prove in the appendix that if a countable group $G$ admits a non-singular Bernoulli action which satisfies the conditions \eqref{asym},
then $G$ must be amenable. 
\end{remark}

Vaes and Wahl \cite[Proposition 4.1 and 6.8]{VW18} prove that every countable amenable group admits a non-singular and conservative Bernoulli action satisfying the conditions \eqref{asym} with $\lambda(0) = \lambda(1) = 1/2$ (this is not quite the way it is phrased in \cite{VW18}; see remark below). In combination with Theorem \ref{main}, this allows us to conclude that:

\begin{corollary}
Every countable amenable group admits a non-singular Bernouilli action $G \acts (\{0,1\}^G,\mu)$ with a weakly mixing Maharam extension. 
\end{corollary}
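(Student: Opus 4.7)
The plan is to deduce the corollary directly from Theorem \ref{main} by combining it with the existence statement due to Vaes and Wahl. The only thing to check is that, for every countable amenable $G$, one can exhibit a family $(\mu_g)_{g \in G}$ of probability measures on $\{0,1\}$ such that:
\begin{enumerate}[label=(\roman*)]
\item there is $\delta > 0$ with $\delta \leq \mu_g(0) \leq 1-\delta$ for all $g \in G$ and the Kakutani condition \eqref{kakutani} holds, so that the product measure $\mu = \prod_g \mu_g$ yields a non-singular Bernoulli action;
\item the resulting non-singular Bernoulli action is conservative;
\item $\mu_g(0) \to 1/2$ while $\sum_g (\mu_g(0) - 1/2)^2 = \infty$.
\end{enumerate}

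The first step is to produce such a family. Since $G$ is countable and amenable, it has non-vanishing first $\ell^2$-cohomology, so there is a function $f : G \ra \bR$ which is not in $\ell^2(G)$ but which satisfies the Dirichlet condition \eqref{dirichlet}. Following the recipe of Vaes--Wahl, one sets $\mu_g(0) = \tfrac{1}{2} + \phi(f(g))$ for a suitable bounded odd Lipschitz function $\phi : \bR \to (-1/2+\delta,\, 1/2-\delta)$ which is linear near $0$. The Lipschitz property together with \eqref{dirichlet} guarantees \eqref{kakutani}, and by suitably truncating $\phi$ one can ensure both that $\mu_g(0) - 1/2$ tends to $0$ along exhausting finite sets while its $\ell^2$-norm remains infinite (the latter being controlled by $f \notin \ell^2(G)$). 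This is exactly the content of \cite[Proposition 4.1]{VW18}. Finally, \cite[Proposition 6.8]{VW18} (or the general conservativity criteria developed there) asserts that for this construction the resulting Bernoulli action is conservative, yielding (ii).

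With items (i)--(iii) in hand, the hypotheses of Theorem \ref{main} are met verbatim with $\lambda = \tfrac{1}{2}(\delta_0 + \delta_1)$, so the Maharam extension of $G \acts (\{0,1\}^G,\mu)$ is weakly mixing. The main obstacle in the whole argument is really packaged inside Theorem \ref{main}: once that theorem is available, the corollary reduces to citing \cite[Propositions 4.1 and 6.8]{VW18} to produce a conservative Bernoulli system realizing \eqref{asym}, a reduction that removes precisely the technical "$\lambda$-inessential exhaustion" hypothesis which forced Vaes and Wahl to restrict to the two classes of amenable groups appearing in Theorem \ref{VWthm}.
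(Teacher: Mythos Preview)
Your approach is essentially the same as the paper's: invoke the Vaes--Wahl existence result to produce a conservative non-singular Bernoulli action satisfying \eqref{asym} with $\lambda(0)=1/2$, and then feed this into Theorem~\ref{main}. Two small corrections are worth making. First, you have the roles of the two cited propositions reversed: in \cite{VW18} it is Proposition~6.8 that constructs the function $F$ (and hence the family $(\mu_g)$ with the required asymptotics and non-singularity), while Proposition~4.1 is the conservativity criterion. Second, your informal sketch of the construction via $\mu_g(0)=\tfrac12+\phi(f(g))$ for a generic $f\notin\ell^2(G)$ satisfying \eqref{dirichlet} does not obviously yield $\mu_g(0)\to\tfrac12$: such an $f$ need not tend to zero (e.g.\ $f(n)=\sqrt{|n|}$ on $\bZ$), so a bounded Lipschitz $\phi$ will not force $\phi(f(g))\to 0$ in general. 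The actual Vaes--Wahl construction in Proposition~6.8 is more delicate, starting instead from a proper function $\varphi:G\to(0,\infty)$ with $\sum_g e^{-c\varphi(g)}=\infty$ for all $c>0$; you should simply defer to that statement rather than re-sketch it.
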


\begin{remark}
Let $G$ be a countable amenable group, and fix $0 <\delta < 1/2$ and pick a proper function 
$\varphi : G \ra (0,\infty)$ such that $\sum_{g \in G} e^{-c \varphi(g)} = \infty$ for all $c > 0$. Let $F : G \ra [0,\delta)$ be the function from 
\cite[Proposition 6.8]{VW18} constructed from $\varphi$, and set
\[
\mu_g(0) = \mu_g(1) = \frac{1}{2} +  F(g), \quad \textrm{for $g \in G$},
\]
so that $1/2-\delta \leq \mu_g(0) \leq 1/2+\delta$. It follows from \cite[Proposition 6.8]{VW18} that $G \acts (\{0,1\}^G,\mu)$ is a non-singular Bernoulli action with
\[
\lim_{g \ra \infty} \mu_g(0) = \frac{1}{2} \qand \sum_{g \in G} \big(\mu_{g}(0) - \frac{1}{2} \big)^2 = \infty,
\]
and for every $c > 0$
\[
\sum_{g \in G} e^{-c\sum_{h \in G} \big(\mu_{gh}(0) - \mu_h(0)\big)^2} \geq \sum_{g \in G} e^{-c \varphi(g)} = \infty.
\]
By \cite[Proposition 4.1]{VW18}, the last condition implies that $G \acts (\{0,1\}^G,\mu)$ is conservative. 
\end{remark}

\subsection{A few words about the proof of Theorem \ref{main}}

Let $G$ be a countable amenable group and $G \acts (X,\mu) = (\{0,1\}^G,\mu)$ a non-singular and conservative Bernoulli action
satisfying the conditions of Theorem \ref{main}. In particular, if we set
\begin{equation}
\label{defGplus}
G^{+} = \big\{ g \in G \, : \, \mu_g(0) > \lambda(0) \big\}
\qand
G^{-} = \big\{ g \in G \, : \, \mu_g(0) < \lambda(0) \big\},
\end{equation}
then we see that either
\[
\sum_{g \in G^{+}} \big(\mu_g(0) - \lambda(0)\big)^2 = \infty
\qor
\sum_{g \in G^{-}} \big(\mu_g(0) - \lambda(0)\big)^2 = \infty.
\]
Since $\mu_g(0) - \lambda(0) = -(\mu_g(1) - \lambda(1))$, for all $g$, we may without loss of generality assume that the first alternative 
holds (otherwise we just interchange $0$ and $1$). So, from now on, the standing assumptions throughout the rest of the paper are:
\begin{equation}
\label{condsfromnowon}
\lim_{g \ra \infty} \mu_g(0) = \lambda(0) \qand \sum_{g \in G^{+}} \big(\mu_g(0)-\lambda(0)\big)^2 = \infty.
\end{equation}
Throughout the paper, we shall also fix a sequence $(g_m)$ in $G$ with the property that
\begin{equation}
\label{def_gm}
\sum_{m} \big(\mu_{g_m}(0) - \lambda(0)\big)^2 < \infty, 
\end{equation}
and define 
\begin{equation}
\label{def_muprim}
\mu'_{g}(0) 
= \left\{
\begin{array}{cc}
\mu_{g}(0) & \textrm{if $g \neq g_m$ for some $m$}. \\
\lambda(0) & \textrm{if $g = g_m$ for some $m$}.
\end{array}
\right.
\end{equation}
By Kakutani's Theorem mentioned above, the corresponding product measures $\mu'$ and $\mu$ are equivalent. In particular, $(X,\mu')$ is a non-singular
conservative Bernoulli action of $G$, still satisfying the conditions of Theorem \ref{main}, and it is of stable type $III_1$ if and only if $(X,\mu)$ is of 
stable type $III_1$. Hence, from now on, all of our arguments will concern the modified product measure $\mu'$, which we shall denote by $\mu$ to avoid cluttering with primes. The idea of changing the measure along these lines stems from \cite{DL18}.\\

Let now $(Y,\nu)$ be an ergodic probability-measure-preserving $G$-space. By the recent result of Danilenko \cite[Theorem 0.1]{D18}, the diagonal action $G \acts (X \times Y,\mu \otimes \nu)$ is ergodic, and we wish to prove that its Maharam extension is ergodic as well. By \cite[Corollary 5.4]{S77}, this is \emph{equivalent} to showing that for every $\eps > 0$ and $0 \leq t \leq 1$ (or $-1 \leq t \leq 0)$ and $\mu \otimes \nu$-measurable subset $C \subset X \times Y$ with positive measure, 
\begin{equation}
\label{essval}
\mu \otimes \nu
\big( 
C \cap \Big(\bigcup_{g \in G} g^{-1}C \cap \big\{ | r_{\mu \otimes \nu}(g,\cdot) - t| < \eps\big\}\Big)
\big) 
> 0.
\end{equation}
\begin{remark}
A word of clarification might be in order. In Corollary 5.4 in \cite{S77}, to prove ergodicity, one needs to ensure \eqref{essval} for 
\emph{all} $t \in \bR$. However, by \cite[Lemma 3.3]{S77}, the set of $t$ for which \eqref{essval} holds is a closed subgroup of $(\bR,+)$, so in 
particular symmetric, and if contains either of the intervals $[0,1]$ or $[-1,0]$, then it must be equal to $\bR$.
\end{remark}

In our proof, it will be useful to apply some techniques from orbit equivalence theory. We denote by $\cR_G \subset X \times X$ the orbit equivalence
relation, and we denote by $[R_G]$ the collection of all partially defined maps between positive $\mu$-measure subsets of $X$ whose graphs are 
contained in $\cR_G$. It is a standard fact that the Radon-Nikodym derivative can be extended to $[\cR_G]$, and we shall denote it by
$r_{\mu}(V,\cdot)$ for $V \in [\cR_G]$ (well-defined on the domain of $V$). 

Let us now fix a dense subset $\cA$ of the measure algebra of $(X \times Y,\mu \otimes \nu)$. By (the 
first formulation in ) \cite[Lemma 2.1]{CHP87}, to prove \eqref{essval} for a given $t \in \bR$ and for every $\mu \otimes \nu$-measurable subset 
of $X \times Y$ with positive measure, it suffices to show that there exists $M \geq 1$ with the property that for $\eps > 0$ and $C \in \cA$,
we have
\begin{equation}
\label{essval2}
\sup_{V \in [\cR_G]} \mu \otimes \nu
\big( 
C \cap V^{-1}C \cap \big\{ | r_{\mu \otimes \nu}(V,\cdot) - t| < \eps\big\}\Big)
\big) 
\geq
\frac{1}{M}  \, \mu \otimes \nu(C).
\end{equation}
Let $\cA$ denote the subset of the measure algebra of $(X \times Y,\mu \otimes \nu)$ consisting of finite unions of
product sets of the form $A_i \times B_i$, where $A_i \subset X$ are disjoint \emph{cylinder sets} and $B_i \subset Y$ disjoint Borel sets. Then $\cA$ is dense, and to prove \eqref{essval2} for every set in $\cA$, it suffices by disjointness of the sets $A_i \times B_i$, to show that there is a constant $M$ such that \eqref{essval2} holds for every set of the form $C = A_i \times B_i$. 

This is the content of the following proposition which is our main technical result, and whose proof will occupy the rest of the paper.
\begin{proposition}
\label{prop_RS_for_cylinders}
Fix $\eps > 0$, a cylinder set $A \subset X$ and a Borel set $B \subset Y$. 
\begin{itemize}
\item If $\lambda(0) \geq 1/2$, then for every $0 \leq t \leq 1$, there exist two Borel sets $C,D \subset A \times B$ with
positive measures, and a measurable injective map $V : C \ra D$ such that
\begin{equation}
\label{eq_RS_for_cylindersplus}
\mu \otimes \nu(C) \geq \frac{1}{27} \mu \otimes \nu(A \times B)
\qand
\big| r_{\mu \otimes \nu}(V,\cdot) \mid_C - t \big| < \eps.
\end{equation}
\item If $\lambda(0) < 1/2$, then the same  holds for every $-1 \leq t \leq 0$.
\end{itemize}
\end{proposition}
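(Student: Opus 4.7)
The plan is to construct $V$ locally, by a measurable choice of group element $(x,y) \mapsto g(x,y) \in G$ with $V(x,y) = (g(x,y)\cdot x,\, g(x,y)\cdot y)$. Because $\nu$ is $G$-invariant, the Maharam cocycle of $V$ depends only on the $X$-coordinate: $r_{\mu\otimes\nu}(V,(x,y)) = r_\mu(g(x,y),x) = \sum_{h \in G}\log(\mu_h(x_h)/\mu_{g(x,y)h}(x_h))$. The goal is to choose $g$ measurably so that this cocycle lies in $(t-\eps, t+\eps)$ on a subset $C \subseteq A\times B$ of relative mass at least $1/27$ while $V(C)\subseteq A\times B$.

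The first step exploits the divergence $\sum_{g \in G^{+}}(\mu_g(0)-\lambda(0))^2 = \infty$ together with the vanishing $\mu_g(0) \to \lambda(0)$ to extract a finite set $F \subset G^{+}$ disjoint from the support $K$ of the cylinder $A$ for which $\sum_{g \in F}(\mu_g(0)-\lambda(0))^2$ lies in a window of constants calibrated to $t$. The log-density $\Psi_F(x) = \sum_{g \in F} \log(\mu_g(x_g)/\lambda(x_g))$ is then a bounded sum of independent random variables under $\mu$, with mean and variance both of order $\sum_F(\mu_g(0)-\lambda(0))^2$. By Berry--Esseen, its distribution is within total variation $o(1)$ of a non-degenerate Gaussian, and the sign constraint on $t$ in the proposition (positive when $\lambda(0)\geq 1/2$, negative when $\lambda(0) <1/2$) corresponds exactly to the side of the real line on which this Gaussian places its mass after the second-order expansion of $\log(\mu_g/\lambda)$. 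A definite fraction, say at least $1/3$, of the $\mu$-mass therefore falls in $\{\Psi_F \in [t-\eps,t+\eps]\}$.

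Next, apply the Ornstein--Weiss Rokhlin lemma to the ergodic pmp action $G \acts (Y,\nu)$ with a Følner tile $T$ sufficiently invariant under left multiplication by $F\cup K$, obtaining a base $Y_0 \subset Y$ such that $\{s Y_0 : s \in T\}$ is pairwise disjoint and covers $Y$ up to $\nu$-mass $<\eps$. For $(x,y)$ with $y \in sY_0$, set $V(x,y) = (g\cdot x,\, g\cdot y)$ where $g = g(s,x) \in G$ is chosen by a measurable matching inside the tower so that (i) $g\cdot y$ lies in another level $s'Y_0$, (ii) $g$ shifts the $F$-coordinates of $x$ in a controlled way so that $r_\mu(g,x)$ agrees with $\Psi_F(x)$ up to $o(1)$, and (iii) $g\cdot x \in A$, which holds because $F$ is disjoint from $K$ and the matching is engineered to leave the $K$-coordinates of $x$ intact. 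Tallying the three independent losses -- at least $1/3$ of $\mu$-mass from the CLT step, a further factor $1/3$ from imposing $V(C)\subseteq A\times B$ by Fubini and a pigeonhole over $T$, and at least $1/3$ from the Rokhlin covering of $Y$ -- one obtains $C\subseteq A\times B$ with $\mu\otimes\nu(C) \geq (1/27)\,\mu\otimes\nu(A\times B)$.

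The hard part is step (ii). In a free Bernoulli action, a ``flip of the $F$-coordinates of $x$'' is generally not an element of $\cR_G$, so the intended flip must be realized by a genuine group translation, and one must verify that such a translation produces the correct contribution to $r_\mu$ on a definite fraction of $x$'s. The Rokhlin tower is the essential device here: within the finite subequivalence relation it cuts out from the hyperfinite $\cR_G$, it supplies enough distinct translates that the required matching exists on a set of positive density. This is precisely the point at which Vaes--Wahl introduced the notion of $\lambda$-inessential subsets to force the existence of the matching; the present approach bypasses that notion by appealing directly to the conservativity hypothesis, which furnishes enough recurrent structure in $\cR_G$ to carry out the matching without any extra assumption on $G$ beyond amenability.
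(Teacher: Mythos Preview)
Your outline has a genuine gap at precisely the point you yourself flag as ``the hard part''.

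You propose to realize a ``flip of the $F$-coordinates'' by a group translation $g$ chosen via a matching in a Rokhlin tower on $Y$, and assert that conservativity ``furnishes enough recurrent structure'' to do this. But a group element $g$ acts on $X$ by shifting \emph{all} coordinates, and $r_\mu(g,x)=\sum_h\log(\mu_h(x_h)/\mu_{gh}(x_h))$ bears no a priori relation to your $\Psi_F(x)$. The Rokhlin tower you build lives on $Y$; it gives you control over where $y$ lands, not over $r_\mu(g,x)$. No mechanism is offered for why some $g$ in the tower should have $r_\mu(g,x)$ close to $\Psi_F(x)$, and this is exactly the difficulty that forced Vaes--Wahl to introduce the $\lambda$-inessential hypothesis. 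Saying ``conservativity replaces it'' is not an argument; you have to say \emph{how}.

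The paper's route is quite different and avoids this trap. First, it passes (via Kakutani) to an equivalent product measure in which $\mu_{g_m}=\lambda$ along an infinite sequence $(g_m)$; this step is essential and is absent from your sketch. With such coordinates available, the map $\phi$ that \emph{swaps} the $g_k$- and $h_k$-coordinates lies in the \emph{homoclinic} groupoid $[\cH_\mu]$ and has $\log(d\phi^{-1}\mu/d\mu)=c(\cdot,\phi(\cdot))$ equal to a finite sum of independent terms to which Lyapunov's CLT applies (this is Lemma~\ref{lemma_gibbs1}). A comparison lemma (Lemma~\ref{lemma_compare_b_and_c}) shows that for $g$ large, $r_\mu(g,x)-r_\mu(g,\phi(x))\approx c(x,\phi(x))$. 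Conservativity is then used, not to build a matching, but simply to supply group elements $g$ with $|r_\mu(g,\phi(x))|<\eps$ returning $\phi(x)$ to the target (Lemma~\ref{lemma_cons}); composing gives $|r_\mu(g,x)-t|<\eps$ with $g\cdot z$ landing back in $A\times B$. A final exhaustion yields the $1/27$. No Rokhlin lemma on $Y$ is used anywhere; the only structural inputs are the measure modification, the homoclinic swap, and conservativity of the product.
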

\begin{remark}\label{rmk: CHP}
The previous version of the paper made use of the first criterion for a number to be an essential value as in \cite{CHP87}[Lemma  2.1]. However this is not a sufficient criteria as the example in section 4 in \cite{HS} shows. For this reason the new proof uses the current version of Proposition \ref{prop_RS_for_cylinders}. 
\end{remark}
\subsection{Acknowledgements}
The first author would like to thank the Einstein Institute of Mathematics at Hebrew University in Jerusalem for the hospitality
during the time this paper was written. We would like to thank Stefaan Vaes for pointing out that the error in \cite{CHP87} as mentioned in Remark \ref{rmk: CHP}.

\section{Proof of Proposition \ref{prop_RS_for_cylinders}}
\label{proofProp}

Let $G$ be a countable amenable group and let $(X,\mu) = (\{0,1\}^G,\mu)$ and $\lambda$ be as in Theorem \ref{main}. 
We also fix a probability measure-preserving ergodic $G$-space $(Y,\nu)$. 

\subsection{Notation}

\subsubsection{Shapes of cylinder sets}

If $K \subset G$ is a finite set and $\sigma : K \ra \{0,1\}$, we define the \emph{cylinder set $A$ with $\shape(A) = (K,\sigma)$} by
\[
A = \big\{ x \in X \, : \, x_g = \sigma(g), \enskip \textrm{for all $g \in K$} \big\}.
\]

\subsubsection{Radon-Nikodym cocycles}

By \cite[Lemma 5]{K48}, there exists a $\mu$-conull subset $X' \subset X$ such that the Radon-Nikodym cocycle of 
$(X,\mu)$ satisfies
\begin{equation}
\label{eq_rn}
r_\mu(g,x) = \sum_{h \in G} \big( \log \mu_h(x_h) - \log \mu_{gh}(x_h)\big), \quad \textrm{for all $x \in X'$}.
\end{equation}
Since $\nu$ is $G$-invariant, we have 
\[
r_{\mu \otimes \nu}(g,(x,y)) = r_\mu(g,x), \quad \textrm{for all $(x,y) \in X' \times Y$}.
\]

\subsubsection{The homoclinic relation and its cocycles}

We define the \emph{homoclinic relation} $\cH_\mu$ on $X'$ by
\[
\cH = \big\{ (x,x') \in X' \times X' \, : \, x_g = x'_g, \enskip \textrm{for all but finitely many $g$} \big\}.
\]
For any family $(\eta_g)$ of functions on $\{0,1\}$ we can define the \emph{homoclinic cocycle} (or \emph{Gibbs cocycle}) 
associated to this family by
\[
c_\eta(x,x') = \sum_{g \in G} \big( \eta_g(x_g) - \eta_g(x'_g)\big), \quad \textrm{for $(x,x') \in \cH_\mu$}.
\]
The case when $\eta_g(a) = \log \frac{\mu_g(a)}{\lambda(a)}$ for $a \in \{0,1\}$ will be of special importance to us, and to 
distinguish this case, we omit $\eta$ as an index. In other words, we write
\begin{equation}
\label{defc}
c(x,x') = \sum_{g \in G} \Big( \log \frac{\mu_g(x_g)}{\lambda(x_g)} - \log \frac{\mu_g(x'_g)}{\lambda(x'_g)} \Big), \quad \textrm{for $(x,x') \in \cH_\mu$}.
\end{equation}
We denote by $[\cH_\mu]$ the collection of partially defined one-to-one measurable maps between measurable subsets of $X'$ whose
graphs are subsets of $\cH_\mu$. If $\phi \in [\cH_\mu]$, we write $\Dom(\phi)$ for its domain of definition, and set
\[
\supp(\phi) = \big\{ g \in G \, : \, \phi(x)_g \neq x_g, \enskip \textrm{for some $x \in \Dom(\phi)$}\big\}.
\]
We note that if $A \subset X$ is a cylinder set with $\shape(A) = (K,\sigma)$ and $K \cap \supp(\phi) = \emptyset$, then
\begin{equation}
\label{phiandA}
\phi^{-1}(A) \cap \Dom(\phi) = \big\{ x \in \Dom(\phi) \, : \, \phi(x)_g = \sigma(g), \enskip \textrm{for all $g \in K$} \big\} = A \cap \Dom(\phi),
\end{equation}
and $\mu(A \cap \Dom(\phi)) = \mu(A) \mu(\Dom(\phi))$. 

\begin{remark}\label{rmk: phi and A}
This observation extends beyond cylinder sets in the following way. Given $F\subset G$, denote by $\mathcal{B}_{F}$ the sigma algebra of sets which are measurable with respect to the coordinates $\left\{x_g:\ g\in F \right\}$. Then for any $A\in \mathcal{B}_{G\setminus \supp(\phi)}$ and $B\subset \mathcal{B}_{\Dom(\phi)}$, one has 
\[
\label{phiandA2}
\phi^{-1}\left(A\cap B\right)=A\cap \Phi^{-1}(B), 
\]
and $A$ and $\phi^{-1}(B)$ are independent thus $\mu\left(\phi^{-1}\left(A\cap B\right)\right)=\mu(A)\mu\left(\Phi^{-1}(B)\right)$. Finally, note that every 
$\phi \in [\cH_\mu]$ preserves the measure-class of $\mu$, and thus the Radon-Nikodym derivative $\frac{d\phi^{-1}\mu}{d\mu}$ is well-defined on the set $\Dom(\phi)$.
\end{remark} 

\subsection{Three lemmas}

In what follows, we shall show how Proposition \ref{prop_RS_for_cylinders} can be reduced to three lemmas, whose proofs are postponed to the next sections.   \\

Our first lemma roughly says that the cocycle $c$ defined in \eqref{defc} attains "most values" quite frequently.  

\begin{lemma}
\label{lemma_gibbs1}
Fix $\eps > 0$ and a finite subset $K \subset G$. If either 
\[
\lambda(0) \geq 1/2  \qand t \geq 0
\]
or 
\[
\lambda(0) < 1/2  \qand t \leq 0,
\] 
then there exists $\phi \in [\cH_\mu]$ with finite support such that
\[
K \cap \supp(\phi) = \emptyset \qand \nu(\Dom(\phi)) \geq \frac{1}{3} 
\]
and
\[
c(x,\phi(x))=\log\frac{d \phi^{-1}\mu}{d\mu}(x) \qand \big| c(x,\phi(x)) - t \big| < \eps, \quad \textrm{for all $x \in \Dom(\phi)$}.
\]
\end{lemma}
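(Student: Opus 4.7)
I would construct $\phi$ explicitly as a finite-support partial bijection built from ``pair-swaps'' --- each pair consists of an ``active'' coordinate $g \in G^+ \setminus K$ with small $\alpha_g := \beta_g(0) - \beta_g(1)$ and a ``neutral'' partner $h_g \in \{g_m\} \setminus K$ disjoint from the active ones. Since $\mu_{g_m}(0) = \lambda(0)$ in the modified measure, one has $\beta_{g_m} \equiv 0$, and each pair-swap is Hamming-weight-preserving on its two-coordinate support, hence preserves the reference measure $\tilde\lambda := \lambda^{\otimes G}$. A direct calculation --- with the $\lambda$-factors in $c$ canceling against the vanishing $\beta_{g_m}$-contributions --- then yields both the identity $c(x,\phi(x)) = \log \frac{d\phi^{-1}\mu}{d\mu}(x)$ and the formula $c(x, \phi(x)) = \sum_{g}\alpha_g$, summed over the swapped active coordinates only.

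First I would derive from $\mu_g(0) \to \lambda(0)$ that $\alpha_g \to 0$ on $G$, and from the uniform bound $\delta \leq \mu_g(0) \leq 1-\delta$ that $|\alpha_g| \leq 2\log(1/\delta)$; combined with $\sum_{g \in G^+}\alpha_g^2 = \infty$ this gives $\sum_{g \in G^+}\alpha_g = \infty$. Fix $\eta \in (0,\eps)$ small. Enumerate a finite $T = \{g_1,\dots,g_N\} \subset G^+ \setminus K$ with $\alpha_{g_i} < \eta$ and $\sum \alpha_{g_i} \gg t$, and a disjoint pool $M \subset \{g_m\} \setminus (K \cup T)$ with $|M| \gg N$. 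For each $x$, I would run a deterministic greedy on $T$: scan $g_1,g_2,\dots$ in order, add $g_i$ to $T(x)$ whenever $x_{g_i} = 0$, and stop once the running sum first exceeds $t$. Then pair each $g \in T(x)$ with a distinct neutral $h_g \in M$ satisfying $x_{h_g} = 1$, using a fixed enumeration of $M$. Define $\phi(x)$ by swapping each $(g,h_g)$ simultaneously, leaving all other coordinates untouched. By construction the support is finite, disjoint from $K$, and $|c(x,\phi(x)) - t| < \eps$.

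The technically delicate point, and the main obstacle, is establishing simultaneously the injectivity of $\phi$ and the quantitative bound $\mu(\Dom(\phi)) \geq 1/3$. For injectivity, the natural ``backward'' decoding on $\phi(x)$ --- running the same greedy now on the post-swap configuration --- must reproduce $(T(x),\{h_g\})$ exactly; this forces one to restrict $\Dom(\phi)$ to those $x$ whose pair-states $(x_g, x_{h_g})$ avoid configurations (pre-existing $(1,0)$-pairs, or a first 1 in $x|_T$ appearing too early) that would confuse the decoder. For the measure bound, the hypothesis $\lambda(0) \geq 1/2$ ensures that $\mu_g(0) \geq \lambda(0) \geq 1/2$ asymptotically, so by the law of large numbers applied to the independent coordinates in $T$ and $M$, one can choose $|T|$ and $|M|$ large enough that the greedy and pairing steps succeed with probability close to $1$ while the injectivity exclusion costs only a controlled combinatorial factor bounded below by $1/3$. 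The geometric constraint $\mu(\Im\phi) = e^{-t}\mu(\Dom\phi) \leq 1$ forces $\mu(\Dom\phi) \leq e^t$, so for $t \in [0,1]$ the target $\mu(\Dom\phi) \geq 1/3$ is feasible but tight when $t$ is close to $1$, requiring the balance in the choice of $|T|, |M|$ to be genuine. The symmetric case $\lambda(0) < 1/2$, $t \leq 0$ is handled identically after interchanging the roles of $0$ and $1$.
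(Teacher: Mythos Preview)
Your proposal shares the paper's key structural idea --- build $\phi$ from coordinate pair-swaps involving the neutral coordinates $\{g_m\}$ (where $\mu_{g_m}=\lambda$, so $\eta_{g_m}\equiv 0$), and use a stopping time to hit the target $t$ --- but the execution diverges at exactly the point you flag as the main obstacle, and that obstacle is not resolved.

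You swap a pair $(g,h_g)$ only when $(x_g,x_{h_g})=(0,1)$, so every swap contributes exactly $+\alpha_g$. This makes the running sum monotone and the overshoot control trivial, but it destroys the involution symmetry: from $\phi(x)$ one cannot distinguish an active coordinate with $\phi(x)_g=1$ that was swapped from one that already had $x_g=1$ and was skipped. Your fix (restrict the domain to configurations where the backward decoder is unambiguous) is stated but not carried out, and it is not clear that the resulting exclusion set has measure bounded away from $2/3$ uniformly in $N$ and the choice of neutral pool; the dynamic pairing (``the $j$-th neutral with value $1$'') compounds the decoding problem, since the pairing itself must be reconstructed from $\phi(x)$.

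The paper sidesteps this entirely by swapping \emph{unconditionally}: it fixes a bijection $k\mapsto (h_k,g_k)$ between the non-neutral and neutral coordinates, sets $\tau_k$ to interchange the two values regardless of what they are, and defines $F_k(x)=\eta_{h_k}(x_{h_k})-\eta_{h_k}(x_{g_k})$. Each $F_k$ is now a genuine mean-positive random variable (not a deterministic $\alpha_g$), the $F_k$ are independent, and Lyapunov's CLT gives $\mu(\{S_n>A_n\})\to 1/2$; since $A_n\to\infty$ one gets $\mu(\{S_N>t\})\geq 1/3$ for large $N$. The stopping time $T_+(x)=\min\{n:S_n(x)>t\}$ then defines $\phi_+$, and injectivity is immediate from the identity $S_n(\phi_+(x))=-S_n(x)$ for $n\leq T_+(x)$: if $T_+(x)<T_+(x')$ then $S_{T_+(x)}(x)>t\geq S_{T_+(x)}(x')$, whence $S_{T_+(x)}(\phi_+(x))<S_{T_+(x)}(\phi_+(x'))$. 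No domain restriction is needed. The price is that the measure bound requires CLT rather than LLN, but this is routine.

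In short: your plan is on the right track but has a genuine gap at injectivity/measure, and the missing idea is to swap unconditionally so that $F_k\circ\tau_k=-F_k$, which gives injectivity for free and shifts the burden to a CLT estimate for the domain.
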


Our second lemma provides a way to compare the cocycles $r_\mu$ and $c$, and roughly says that if $\phi \in [\cH_\mu]$ and $r_\mu(g,\phi(x))$ is small and $c(x,\phi(x))$ is close to $t$, then 
$r_\mu(g,x)$ is close to $t$ as well. 

\begin{lemma}
\label{lemma_compare_b_and_c}
For every $\eps > 0$ and $\phi \in [\cH_\mu]$ with finite support, there is a finite set $L \subset G$ such that if
\begin{equation}
\label{eq1_compare_b_and_c}
L \cap g(\supp(\phi))  = \emptyset,
\end{equation}
then 
\begin{equation}
\label{eq2_compare_b_and_c}
\big| r_\mu(g,x) - r_\mu(g,\phi(x)) - c(x,\phi(x)) \big| < \eps, \quad \textrm{for all $x \in \Dom(\phi)$}.
\end{equation}
\end{lemma}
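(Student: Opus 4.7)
The plan is to compute the quantity $r_\mu(g,x) - r_\mu(g,\phi(x)) - c(x,\phi(x))$ explicitly as a finite sum indexed by $\supp(\phi)$, and then use the asymptotic hypothesis $\lim_{h \to \infty} \mu_h(0) = \lambda(0)$ to control each term.

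First I would write $S = \supp(\phi)$, which is finite by assumption, and observe that for any $x \in \Dom(\phi)$ the coordinates $x_h$ and $\phi(x)_h$ coincide for $h \notin S$. Using the explicit formula \eqref{eq_rn} for $r_\mu$ and the definition \eqref{defc} of $c$, all three quantities $r_\mu(g,x)$, $r_\mu(g,\phi(x))$ and $c(x,\phi(x))$ can therefore be rewritten as absolutely convergent \emph{finite} sums once one takes differences of the terms at $x$ and $\phi(x)$. A direct cancellation then gives the identity
\begin{equation*}
r_\mu(g,x) - r_\mu(g,\phi(x)) - c(x,\phi(x))
= \sum_{h \in S} \log \frac{\mu_{gh}(\phi(x)_h)}{\lambda(\phi(x)_h)} - \sum_{h \in S} \log \frac{\mu_{gh}(x_h)}{\lambda(x_h)},
\end{equation*}
where the $\log \mu_h$-terms coming from $r_\mu(g,x) - r_\mu(g,\phi(x))$ cancel against the $\log \mu_h$-terms of $c(x,\phi(x))$, leaving only the $\log \mu_{gh}$-terms from $r_\mu$ and the $\log \lambda$-terms from $c$, which combine into the ratios $\log(\mu_{gh}/\lambda)$ above.

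Next I would use the standing hypothesis that $\delta \leq \mu_h(0) \leq 1-\delta$ for all $h \in G$ together with $\mu_h \to \lambda$ as $h \to \infty$ to conclude that the function $h \mapsto \max_{a \in \{0,1\}} |\log(\mu_h(a)/\lambda(a))|$ tends to $0$ as $h \to \infty$. Hence, for any $\eps > 0$, one may choose a finite set $L \subset G$ so large that
\begin{equation*}
\Bigl| \log \frac{\mu_{h'}(a)}{\lambda(a)} \Bigr| < \frac{\eps}{2|S|} \qquad \textrm{for every $h' \notin L$ and every $a \in \{0,1\}$.}
\end{equation*}
Now the assumption $L \cap g\, \supp(\phi) = \emptyset$ means precisely that $gh \notin L$ for every $h \in S$, so each of the $2|S|$ summands on the right-hand side of the displayed identity is bounded in absolute value by $\eps/(2|S|)$, giving the desired estimate \eqref{eq2_compare_b_and_c}.

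There is no real obstacle here: the only point requiring a small amount of care is the bookkeeping in the cancellation that produces the key identity, and verifying that the choice of $L$ depends only on $\phi$, $\eps$, and the family $(\mu_h)$, and not on $x$ or on $g$ itself. The uniform lower bound $\mu_h(0), \mu_h(1) \geq \delta$ ensures that all logarithms appearing in the argument are well-defined and bounded, so the convergence to zero of $\log(\mu_{h'}/\lambda)$ as $h' \to \infty$ is uniform in $a$.
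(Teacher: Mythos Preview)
Your proof is correct and follows essentially the same approach as the paper: both compute $r_\mu(g,x) - r_\mu(g,\phi(x)) - c(x,\phi(x))$ explicitly as a sum over $h \in \supp(\phi)$ of terms of the form $\log(\mu_{gh}(\cdot)/\lambda(\cdot))$, and then define $L$ as (the complement of) the set where $\max_a |\log(\mu_h(a)/\lambda(a))| < \eps/(2|\supp(\phi)|)$, which is finite by the asymptotic hypothesis. The only difference is cosmetic---the paper first isolates $c(x,\phi(x))$ inside $r_\mu(g,x) - r_\mu(g,\phi(x))$ before bounding the remainder, whereas you subtract all three at once.
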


Our third lemma is very general and says that if $G \acts (X \times Y,\mu \otimes \nu)$ is conservative, then 
the cocycle $r_{\mu \otimes \nu}(g,\cdot)$ is "frequently" very small for arbitrarily large $g$.

\begin{lemma}
\label{lemma_cons}
For every $\eps > 0$ and finite subset $F \subset G$ and $\mu \otimes \nu$-measurable set $C \subset X \times Y$, we have
\begin{equation}
\label{eq_cons}
\mu \otimes \nu
\big( 
C \cap \Big( \bigcup_{g \notin F} g^{-1}C \cap \big\{ |r_{\mu \otimes \nu}(g,\cdot)| < \eps\big\} \Big)
\big)
= 
\mu \otimes \nu(C).
\end{equation}
\end{lemma}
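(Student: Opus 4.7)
The plan is to lift the problem to the Maharam extension of the product action and exploit its conservativity. The action $G \acts (X \times Y, \mu \otimes \nu)$ is conservative by hypothesis, and Maharam's theorem (see, e.g., Aaronson's book on infinite ergodic theory) guarantees that the Maharam extension $G \acts (X \times Y \times \bR, \mu \otimes \nu \otimes e^t\,dt)$, defined by $g \cdot (x,y,t) = (gx, gy, t + r_\mu(g,x))$, is also conservative as an infinite measure-preserving action. Note that $r_{\mu \otimes \nu}(g,(x,y)) = r_\mu(g,x)$ since $\nu$ is $G$-invariant, which was already observed in the paper.

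Fix $\eps > 0$, a finite set $F \subset G$, and a measurable $C \subset X \times Y$. Form the finite-measure cylinder
\[
\widetilde{C} := C \times (-\eps/2, \eps/2),
\]
whose mass is $\mu \otimes \nu(C) \cdot \int_{-\eps/2}^{\eps/2} e^t\,dt < \infty$. Since $F$ is finite, conservativity of the Maharam extension implies that for almost every $(x,y,t) \in \widetilde{C}$ there exists $g \in G \setminus F$ with $g \cdot (x,y,t) \in \widetilde{C}$. Unwinding the definition, this forces $(gx, gy) \in C$ and $|t + r_\mu(g,x)| < \eps/2$; combined with $|t| < \eps/2$ it yields $|r_{\mu \otimes \nu}(g,(x,y))| = |r_\mu(g,x)| < \eps$.

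It remains to descend back to $X \times Y$ via Fubini. Let $N \subset \widetilde{C}$ be the exceptional null set from the previous step. Since $N$ is $(\mu \otimes \nu \otimes e^t\,dt)$-null and $\widetilde{C}$ is a product, for $\mu \otimes \nu$-a.e.\ $(x,y) \in C$ the slice $\{t \in (-\eps/2, \eps/2) : (x,y,t) \in N\}$ has zero Lebesgue measure, hence is a proper subset of $(-\eps/2, \eps/2)$. Picking any $t$ in its complement, the preceding paragraph provides some $g \notin F$ with
\[
(x,y) \in C \cap g^{-1}C \cap \big\{|r_{\mu \otimes \nu}(g,\cdot)| < \eps\big\},
\]
which gives the desired full-measure statement.

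The only non-bookkeeping ingredient is Maharam's conservativity statement for the extension; everything else is a routine unwinding of definitions together with Fubini. I do not expect any genuine obstacle here, as the argument is a general non-singular ergodic theory fact that makes no use of the Bernoulli structure or of amenability.
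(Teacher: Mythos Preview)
Your proof is correct and follows the same route as the paper: lift to the conservative Maharam extension, apply recurrence to the finite-measure set $C \times (-\delta,\delta)$, and descend via Fubini. The paper additionally invokes essential freeness of the Bernoulli action (to apply Halmos recurrence \cite[Proposition 1.6.2]{A97} and obtain infinitely many returns, hence one outside the finite set $F$), which is the only step you leave implicit.
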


\subsection{Proof of Proposition \ref{prop_RS_for_cylinders} assuming Lemmas \ref{lemma_gibbs1} and \ref{lemma_compare_b_and_c} and \ref{lemma_cons}}
We start the proof of the following proposition which is an intermediate step.
\begin{proposition}\label{an enhanced CHP lemma}
Fix $\eps > 0$, a cylinder set $A \subset X$ and a Borel set $B \subset Y$. 
\begin{itemize}
	\item If $\lambda(0) \geq 1/2$, then for every $t \geq 0$,  there exists $A'\subset A$  with $\mu\left(A'\right)\geq \frac{1}{3}\mu(A)$ and $\phi \in [\cH_\mu]$ such that   $\phi\left(A'\right)\subset A$   and for all $D\subset A'\times B$,
	\begin{equation}
	\label{eq_RS_for_cylindersplus}
\mu \otimes \nu
\big( 
D \cap \Big(\bigcup_{g \in G} g^{-1}\left(\left(\phi\times id\right)(D)\right) \cap \big\{ | r_{\mu \otimes \nu}(g,\cdot) - t| < \eps\big\}\Big)
\big)=\mu\otimes \nu\left(D\right)
	\end{equation}
	\item If $\lambda(0) < 1/2$, then the same holds with for all $t\leq 0$. 
\end{itemize}	
\end{proposition}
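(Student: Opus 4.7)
Assume $\lambda(0)\geq 1/2$ and $t\geq 0$; the other case follows from the $0\leftrightarrow 1$ symmetry used in \eqref{condsfromnowon}. First, let $K\subset G$ be the finite cylinder-shape support of $A$, and apply Lemma~\ref{lemma_gibbs1} with this $K$ and tolerance $\eps/3$ to obtain a finitely supported $\phi\in[\cH_\mu]$ with $K\cap\supp(\phi)=\emptyset$, $\mu(\Dom(\phi))\geq 1/3$, and $|c(x,\phi(x))-t|<\eps/3$ on $\Dom(\phi)$. Set $A':=A\cap\Dom(\phi)$. The observation \eqref{phiandA}, which applies since $K\cap\supp(\phi)=\emptyset$, yields both $\phi(A')\subset A$ and, by product independence of the cylinder- and non-cylinder coordinates, $\mu(A')=\mu(A)\mu(\Dom(\phi))\geq \mu(A)/3$.

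Now fix an arbitrary measurable $D\subset A'\times B$ and write $\Psi:=\phi\times\id$, so that $\Psi$ is a measure-class-preserving bijection $D\to\Psi(D)\subset A\times B$. Apply Lemma~\ref{lemma_compare_b_and_c} to $\phi$ with tolerance $\eps/3$ to obtain a finite $L\subset G$, and put $F:=\bigcup_{s\in\supp(\phi)} L s^{-1}$, which is finite and has the property that $g\notin F$ forces $L\cap g\supp(\phi)=\emptyset$, so that the bound \eqref{eq2_compare_b_and_c} applies for all such $g$. Next apply Lemma~\ref{lemma_cons} with this $F$ and tolerance $\eps/3$ to the set $\Psi(D)$: for $\mu\otimes\nu$-almost every $w=\Psi(x,y)\in\Psi(D)$ there is $g\notin F$ with $g\cdot w\in\Psi(D)$ and $|r_\mu(g,\phi(x))|<\eps/3$. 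The three bounds $|r_\mu(g,x)-r_\mu(g,\phi(x))-c(x,\phi(x))|<\eps/3$, $|c(x,\phi(x))-t|<\eps/3$, and $|r_\mu(g,\phi(x))|<\eps/3$ then combine via the triangle inequality to give $|r_\mu(g,x)-t|<\eps$, which is the cocycle estimate required by the proposition.

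\textbf{Main obstacle.} The subtle point that I expect to require the most care is the translation of the set-membership produced by conservativity, namely $g\cdot \Psi(x,y)\in\Psi(D)$, into the formulation $g\cdot(x,y)\in\Psi(D)$ that actually appears in the proposition: the raw $G$-action and its $\Psi$-conjugate differ only in coordinates inside the finite window $g\supp(\phi)\subset G\setminus L$, and the bridge between them should go through the conditional independence captured in Remark~\ref{rmk: phi and A}, where $D$ is decomposed with respect to $\cB_{\supp(\phi)}$ and $\cB_{G\setminus\supp(\phi)}$. Once this identification is in place, the bookkeeping that ``almost every'' on $\Psi(D)$ transfers to ``almost every'' on $D$ via the measure-class-preserving bijection $\Psi$ upgrades the pointwise conclusion to the measure identity $\mu\otimes\nu\bigl(D\cap\bigcup_{g} g^{-1}\Psi(D)\cap\{|r-t|<\eps\}\bigr)=\mu\otimes\nu(D)$.
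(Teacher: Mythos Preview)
Your proposal follows essentially the same route as the paper's proof: apply Lemma~\ref{lemma_gibbs1} to get $\phi$, set $A'=A\cap\Dom(\phi)$, invoke Lemma~\ref{lemma_compare_b_and_c} for $L$, apply Lemma~\ref{lemma_cons} to $\Psi(D)$ outside the finite set $F$, and combine the three $\eps/3$-bounds. Two small points to align with the paper: first, enlarge $L$ so that $L\supset K$ (the paper does this explicitly); second, in your ``main obstacle'' the relevant map is the \emph{conjugate} $\phi_z:=g\circ\phi\circ g^{-1}$, whose support is $g\,\supp(\phi)$, so the decomposition you want is with respect to $\cB_{g\,\supp(\phi)}$ versus $\cB_{G\setminus g\,\supp(\phi)}$, not $\cB_{\supp(\phi)}$ --- the paper phrases the key identity as $\phi_z^{-1}\bigl(\Psi(D)_{g y}\bigr)\cap\Dom(\phi_z)=\Psi(D)_{g y}\cap\Dom(\phi_z)$ and appeals to Remark~\ref{rmk: phi and A} for $\phi_z$.
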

\begin{proof} We only present the proof when $\lambda(0) \geq 1/2$; the case $\lambda(0) < 1/2$ is completely analogous. Let us fix $\eps > 0$, a cylinder set $A \subset X$ with $\shape(A) = (K,\sigma)$ and a Borel set $B \subset Y$. We fix $t \geq 0$ and use Lemma \ref{lemma_gibbs1} to find $\phi \in [\cH_\mu]$ with
\[
K \cap \supp(\phi) = \emptyset \qand \mu(\Dom(\phi)) \geq \frac{1}{3}
\]
such that $|c(x,\phi(x)) - t| < \eps/3$. \\

Set $A' = A \cap \Dom(\phi)$ and note that since $K \cap \supp(\phi) = \phi$, we have
\[
\phi^{-1}(A) \cap \Dom(\phi) = A \cap \Dom(\phi), 
\] 
by \eqref{phiandA}, whence $\phi(A') \subset A$, and $\mu(A') \geq \frac{1}{3}\mu(A)$. In order to shorten notation write $\Psi:=\phi\times id$. \\

By Lemma \ref{lemma_compare_b_and_c}, we can find a finite set $L \supset K$ such that whenever $g \in G$ and $\phi \in [\cH_\mu]$
satisfy 
\[
L \cap g(\supp(\phi)) = \emptyset,
\] 
then
\begin{equation}
\label{rtoc}
\big| r_\mu(g,x) - r_\mu(g,\phi(x)) - c(x,\phi(x)) \big| < \frac{\eps}{3}, \quad \textrm{for all $x \in \Dom(\phi)$}.
\end{equation}
Let $D\subset A'\times B$. We now apply Lemma \ref{lemma_cons} to $ \Psi(D)$ and the finite set 
\[
F = \{g \in G \, : \, L \cap g(\supp(\phi)) \neq \emptyset \big\} \subset G
\]
to conclude that
\[
E = \Psi(D) \cap \Big( \bigcup_{g \notin F} g^{-1}(\Psi (D) \cap \big\{ |r_{\mu \otimes \nu}(g,\cdot)| < \eps/3 \big\}\Big)
\]
satisfies $\mu \otimes \nu(E) = \mu\otimes \nu (\psi (D))$ and thus
\begin{equation}
\label{lowbnd}
\mu \otimes \nu\big(\Psi^{-1}(E)\big) = \mu\otimes \nu(D).
\end{equation}
For every $z = (x,y) \in \Psi^{-1}(E)=(\phi \times \id)^{-1}(E)$, there exists $g_z \notin F$ such
that
\[
|r_{\mu}(g_z,\phi(x))| < \frac{\eps}{3} \qand \left(g_z(\phi(x)),g_z(y)\right)\in \Psi(D).
\]
Set $D_y:=\left\{x\in X:\ (x,y)\in D\right\}\subset A'$ and $\phi_z = g_z \circ \phi \circ g_z^{-1}$ and note that for all $x\in D_y$, 
\[
g_z(x) \in \phi_z^{-1}(\Psi(D)_{g_z(y)}) \qand \supp(\phi_z) = g_z(\supp(\phi)) \qand \Dom(\phi_z) = g_z(\Dom(\phi)).
\]
By Remark \ref{rmk: phi and A}  we see that, 
\[
\phi_z^{-1}\left( \Psi(D)_{g_{z}(y)} \right) \cap \Dom(\phi_z) = \Psi(D)_{g_z(y)} \cap \Dom(\phi_z),
\]
and since $x \in D_y$, and thus $g_z(x) \in \Dom(\phi_z)$, we conclude that $g_z(x) \in \Psi(D)_{g_z(y)}$, in other words $\left(g_z(x),g_z(y)\right)\in (\phi\times id)(D)$. \\

By \eqref{rtoc},
\begin{eqnarray*}
\big| r_{\mu}(g_z,x) - t \big| 
&\leq & 
\big| r_\mu(g_z,x) - r_\mu(g_z,\phi(x)) - c(x,\phi(x))\big| +  \\
&+& \big|r_\mu(g_z,\phi(x))\big| + \big|c(x,\phi(x))-t\big| \\
&<& \frac{\eps}{3} + \frac{\eps}{3} + \frac{\eps}{3} = \eps,
\end{eqnarray*}
so we can conclude that
\[
z = (x,y) \in D \cap g_z^{-1}((\phi\times id)(D)  \cap \big\{ |r_{\mu \otimes \nu}(g_z,\cdot) - t| < \eps\big\},
\]
and thus,
\begin{equation}
\label{incl}
(\phi \times \id)^{-1}(E) \subset  D \cap \Big( \bigcup_{g \notin F} g_z^{-1}((\phi\times id)(D)  \cap \big\{ |r_{\mu \otimes \nu}(g_z,\cdot) - t| < \eps\big\}\Big).
\end{equation}
By combining \eqref{incl} and \eqref{lowbnd}, we get
\[
\mu \otimes \nu \left(D \cap \Big( \bigcup_{g \notin F} g^{-1}((\phi\times id)(D) \cap \big\{ |r_{\mu \otimes \nu}(g,\cdot) - t| < \eps\big\}\Big) \right)=\mu\otimes\nu \left(D\right)
\]
which finishes the proof.
\end{proof}

\begin{proof}[Proof of Proposition \ref{prop_RS_for_cylinders}]
Again we assume that $\lambda(0)\geq 1/2$, as the case $\lambda(0) < 1/2$ is completely analogous. \\

Let $A\subset X$ be a cylinder set and $B\subset Y$ a Borel set and pick $0<t<1$ and $\epsilon>0$ such that $t-\epsilon>0$ and $t+\epsilon<1$. Let us further choose $A'$ and $\phi\in [\cH_\mu]$ as in Proposition \ref{an enhanced CHP lemma}.  

Starting with the set $D_0:=A'\times B$, Proposition \ref{an enhanced CHP lemma} asserts that there are two sets $C_0\subset D_0$ and $E_0\subset (\phi\times id)(D_0)$ and an injective measurable map $V_0:C_0\to E_0$ satisfying for all $z\in C_0$, $(z,V_0(z))\in \mathcal{R}$ and 
\[
\left|\log\left(\frac{dV_o^{-1}\mu}{d\mu}\right)-t\right|<\epsilon.
\]
If $\mu\otimes \nu \left((A'\times B)\setminus{C_0}\right)>0$ and $\mu\otimes \nu \left((\phi(A')\times B)\setminus{E_0}\right)>0$, then we can apply 
Proposition \ref{an enhanced CHP lemma} again, this time to the set $D_1=(A'\times B)\setminus C_0$, and find two positive measure subsets $C_1\subset D_1$ and $E_1\subset (\phi\times id)\left(C_1\right)$ and a partial transformation  $V_1: C_1\to E_1$ satisfying for all $z\in C_1$, $(z,V_1(z)) \in \mathcal{R}$ and the above condition on the Radon-Nikodym derivative. Since $C_0\cap C_1=E_0\cap E_1=\emptyset$ one can without problems define a partial transformation $V:C_0\cup C_1=E_0\cup E_1$. \\

If $\mu\otimes \nu \left((A'\times B)\setminus{C_0\cup C_1}\right)>0$ and  $\mu\otimes \nu \left((\phi(A')\times B)\setminus{D_0\cup D_1}\right)>0$  one can proceed in a similar way and enlarge the domain and the range of $V$. By standard exhaustion type argument one will end up with a possibly terminating sequence of sets $C_0,C_1,...\subset A'\times B$ and $D_0,D_1,...\subset A\times B$ such that either $\cup_{k=0}^\infty C_i=A'\times B$ or $\cup_{k=0}^\infty D_i=\phi(A')\times B$ (modulo $\mu\otimes \nu$ null sets) and a partial transformation $V: \cup_{i=0}^\infty C_i\to \cup_{i=0}^\infty D_i$ such that $x\in \cup_{i=0}^\infty C_i$ $(x,Vx)\in\mathcal{R}$  and 
\[
\left|\log\left(\frac{dV^{-1}\mu}{d\mu}\right)-t\right|<\epsilon.
\]
It remains to note that either $\mu\otimes \nu \left(\cup_{i=0}^\infty C_i\right)=\mu\otimes \nu\left(A'\times B\right)=\frac{1}{3}\mu\otimes \nu(A\times B)$ or $\cup_{i=0}^\infty D_i=\phi(A')\times B$. If the latter case happens then by the condition on the Radon-Nikodym derivatives of $V$ and $\phi$,
\[
e\mu\otimes \nu \left(\cup_{i=0}^\infty C_i\right)\geq e^{t+\epsilon}\mu\otimes \nu \left(\cup_{i=0}^\infty C_i\right)\geq \mu\otimes\nu (\phi(A')\times B),
\]
and thus $\mu\otimes \nu \left(\cup_{i=0}^\infty C_i\right)\geq e^{-1}\mu\otimes\nu(\phi(A)\times B)\geq e^{-2}\mu\otimes\nu\left(A'\times B\right)$, where the
last inequality is established along similar lines as above. Finally, the last term is bigger than $\frac{1}{27}\mu\otimes\nu(A\times B)$.
\end{proof}
\section{Proofs of Lemmas \ref{lemma_gibbs1} and \ref{lemma_compare_b_and_c}}

Throughout this section, we retain the notation introduced in Section \ref{proofProp}. We recall our standing 
assumptions on $(\mu_g)$ and $\lambda$ from Theorem \ref{main} and from \eqref{condsfromnowon}, namely that there exists $\delta > 0$ such
that
\begin{equation}
\label{deltabnd}
\delta \leq \mu_g(0) \leq 1-\delta, \quad \textrm{for all $g \in G$},
\end{equation}
a sequence $(g_m)$ of $G$ such that
\[
\mu_{g_m}(0) = \lambda(0), \quad \textrm{for all $m$},
\]
and 
\begin{equation}
\label{condsagain}
\lim_{g \ra \infty} \mu_g(0) = \lambda(0) \qand \sum_{g \in G^{+}} \big(\mu_g(0)-\lambda(0)\big)^2 = \infty,
\end{equation}
where
\[
G^{+} = \big\{g \in G \, : \, \mu_g(0) > \lambda(0) \big\}.
\]
If $\eta_g : \{0,1\} \ra \bR$ is a sequence of functions, indexed by $g \in G$, we define the associated homoclinic (or Gibbs) cocycle $c_\eta : \cH_\mu \ra \bR$ by
\[
c_\eta(x,x') = \sum_{g \in G} \big(\eta_g(x_g) - \eta_g(x'_g)\big), \quad \textrm{for $(x,x') \in \cH_\mu$}.
\]

\subsection{Proof of Lemma \ref{lemma_gibbs1}}

Lemma \ref{lemma_gibbs1} is an immediate consequence of the following two lemmas, whose proofs are presented below.

\begin{lemma}
\label{prfgibbs}
Let $\eta_g : \{0,1\} \ra \bR$ and suppose that $\lim_{g \ra \infty} \|\eta_g\|_\infty = 0$, and
\begin{equation}
\label{lyap1}
 \sum_{g \in G^{+}} \big(\eta_g(0) - \eta_g(1)\big)^2 = \infty.
\end{equation}
Fix $\eps  > 0$ and a finite subset $K \subset G$. 
\begin{itemize}
\item If 
\begin{equation}
\label{lyap2plus}
\sum_{g \in G^{+}} (\eta_g(0) - \eta_g(1))(\mu_g(0) - \mu_g(1)) = \infty,
\end{equation}
then, for every $t \geq 0$, there exists $\phi_{+} \in [\cH_\mu]$ with
\[
K \cap \supp(\phi_{+}) = \emptyset \qand \nu(\Dom(\phi_{+})) \geq \frac{1}{3} 
\]
such that  $\log\left(\frac{d\phi_{+}^{-1}\mu}{d\mu}\right)|_{\Dom\phi}=c_\eta(\cdot,\phi_{+}(\cdot))$ and 
\[
\big| c_\eta(x,\phi_{+}(x)) - t \big| < \eps, \quad \textrm{for all $x \in \Dom(\phi_{+})$}.
\]
\item If 
\begin{equation}
\label{lyap2minus}
\sum_{g \in G^{+}} (\eta_g(0) - \eta_g(1))(\mu_g(0) - \mu_g(1)) = -\infty,
\end{equation}
then, for every $t \leq 0$, there exists $\phi_{-} \in [\cH_\mu]$ with
\[
K \cap \supp(\phi_{-}) = \emptyset \qand \nu(\Dom(\phi_{-})) \geq \frac{1}{3} 
\]
such that $\log\left(\frac{d\phi_{-}^{-1}\mu}{d\mu}\right)|_{\Dom\phi}=c_\eta(\cdot,\phi_{-}(\cdot))$ and 
\[
\big| c_\eta(x,\phi_{-}(x)) - t \big| < \eps, \quad \textrm{for all $x \in \Dom(\phi_{-})$}.
\]
\end{itemize}
\end{lemma}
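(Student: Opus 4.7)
I focus on Case 1 (condition \eqref{lyap2plus}) with target $t \geq 0$; Case 2 follows by the symmetry $0 \leftrightarrow 1$. I look for $\phi \in [\cH_\mu]$ of the product form $\phi(x)_F = \pi(x_F)$, $\phi(x)|_{G\setminus F} = x|_{G\setminus F}$, where $F \subset G$ is a finite subset of $G^{+} \setminus K$, chosen disjoint from the sequence $(g_m)_m$ so that $\mu_g \neq \lambda$ throughout $F$, and $\pi$ is a partial injection of $\{0,1\}^F$. Setting $E(u) = \sum_{g \in F}\eta_g(u_g)$, $\mu_F(u) = \prod_{g \in F}\mu_g(u_g)$, and $\Psi := E + \log\mu_F$, the product structure of $\mu$ immediately gives
\[
c_\eta(x,\phi(x)) = E(x_F) - E(\pi(x_F)), \qquad \log\tfrac{d\phi^{-1}\mu}{d\mu}(x) = \log\mu_F(\pi(x_F)) - \log\mu_F(x_F),
\]
so the pointwise identity $\log(d\phi^{-1}\mu/d\mu)|_{\Dom\phi} = c_\eta(\cdot,\phi(\cdot))$ is \emph{exactly} equivalent to $\pi$ preserving $\Psi$.

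The proof then reduces to producing a finite $F$ as above together with a $\Psi$-preserving partial injection $\pi: U \to V$ on $\{0,1\}^F$ satisfying $|E(u) - E(\pi(u)) - t| < \varepsilon$ on $U$ and $\mu_F(U) \geq 1/3$. I would stratify $\{0,1\}^F$ along the (finitely many) level sets $L_c = \{u : \Psi(u) = c\}$; since $E(u) = c - \log\mu_F(u)$ on $L_c$, pairing $u, v \in L_c$ with $E$-difference close to $t$ is the same as pairing them with ratio $\mu_F(v)/\mu_F(u) \in (e^{t-\varepsilon}, e^{t+\varepsilon})$. Such a matching I would produce greedily, after sorting each $L_c$ by $\log\mu_F$, by matching each $u$ with an unmatched $v$ in the correct bin; the matched mass is estimated through local-limit-type bounds for the joint distribution of $(\Psi, \log\mu_F)$ under $\mu_F$ on $\{0,1\}^F$.

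The main obstacle is this matching step: exact pointwise preservation of $\Psi$ forces $\pi$ to respect a discrete, highly non-uniform partition of $\{0,1\}^F$, while enforcing $E$-increments inside a window of width $\varepsilon$ around $t$ imposes a second simultaneous constraint. The four assumptions enter as follows. The $L^\infty$-decay $\|\eta_g\|_\infty \to 0$ makes individual contributions to $\Psi$ and $\log\mu_F$ uniformly small, enabling a local CLT; the divergence \eqref{lyap1} forces the variance of $E$ to diverge, producing arbitrary spread within typical fibers; the uniform ellipticity $\delta \leq \mu_g(0) \leq 1-\delta$ gives non-degeneracy of individual factors; and crucially \eqref{lyap2plus} (through its sign) ensures that the conditional mean of $E$ given $\Psi$ is biased in the direction needed for positive increments of any prescribed size $t \geq 0$ to be attainable. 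Once the matching is produced, the required identity between $\log(d\phi^{-1}\mu/d\mu)$ and $c_\eta(\cdot,\phi(\cdot))$ is then automatic from the initial reduction.
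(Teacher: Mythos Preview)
Your reduction to finding a partial injection $\pi$ on $\{0,1\}^F$ that \emph{exactly} preserves $\Psi = E + \log\mu_F$ is correct, but it is also where the approach breaks down. Write $\psi_g(a) = \eta_g(a) + \log\mu_g(a)$, so that $\Psi(u) = \sum_{g\in F}\psi_g(u_g)$. Two configurations $u,v\in\{0,1\}^F$ lie in the same fibre $L_c$ if and only if the signed sum $\sum_{g:\,u_g\ne v_g}\pm(\psi_g(0)-\psi_g(1))$ vanishes. For generic values of $(\mu_g)$ (and of $(\eta_g)$) the numbers $\psi_g(0)-\psi_g(1)$ are rationally independent, so $\Psi$ is \emph{injective} on $\{0,1\}^F$ and every level set $L_c$ is a singleton. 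In that case the only $\Psi$-preserving partial injection is (a restriction of) the identity, which forces $c_\eta(x,\phi(x))\equiv 0$ and makes any $t\ne 0$ unreachable. No amount of local-limit input can rescue this: a local CLT controls the distribution of $(\Psi,\log\mu_F)$ on mesoscopic scales, not exact coincidences, and you need exact coincidences because the identity $\log(d\phi^{-1}\mu/d\mu)=c_\eta(\cdot,\phi(\cdot))$ is a pointwise equality.

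The paper avoids this obstruction by \emph{not} restricting the support of $\phi$ to $G^+\setminus\{g_m\}$. It pairs each $h_k\in G\setminus\{g_m\}$ with a ``reservoir'' coordinate $g_k$ on which $\mu_{g_k}=\lambda$ (and hence $\eta_{g_k}=0$ for the specific $\eta_g=\log(\mu_g/\lambda)$ that is ultimately used), and lets $\tau_k$ simply swap the values at $h_k$ and $g_k$. Because the reservoir contributes nothing to either side, one gets $\log(d\tau_k^{-1}\mu/d\mu)(x)=c_\eta(x,\tau_k x)=F_k(x)$ for free, with $F_k$ depending only on $(x_{h_k},x_{g_k})$ and hence independent across $k$. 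A central limit theorem gives $\mu(S_n>A_n)\to 1/2$, and since $A_n\to+\infty$ under \eqref{lyap2plus} one finds $N_+$ with $\mu(S_{N_+}>t)\ge 1/3$; a first-passage stopping time $T_+(x)=\min\{n:S_n(x)>t\}$ then defines $\phi_+$ by applying $\tau_k$ for $k\le T_+(x)$. The step sizes $\|F_k\|_\infty<\varepsilon/2$ give $t<S_{T_+(x)}(x)<t+\varepsilon$, and injectivity follows from the stopping-time structure. The essential idea you are missing is this swap-with-a-$\lambda$-coordinate device: it is exactly what makes the two sides of the required identity match term by term, without imposing any genericity-breaking constraint on $\pi$.
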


\begin{lemma}
\label{etaisgood}
Let $\eta_g = \log \frac{\mu_g}{\lambda}$. Then $\lim_{g \ra \infty} \|\eta_g\|_\infty = 0$, and the sequence $(\eta_g)$ satisfies \eqref{lyap1}. Furthermore,  
\begin{itemize}
\item if $\lambda(0) \geq 1/2$, then $(\eta_g)$  satisfies \eqref{lyap2plus}.
\item if  $\lambda(0) < 1/2$, then $(\eta_g)$ satisfies \eqref{lyap2minus}.
\end{itemize}
\end{lemma}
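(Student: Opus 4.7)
The plan is to reduce all three assertions to elementary properties of the single-variable function $f(x) = \log \frac{x(1-p)}{(1-x)p}$, where I abbreviate $p := \lambda(0)$ and $p_g := \mu_g(0)$. Note that $p \in [\delta, 1-\delta]$ as a limit of points in this compact interval, and $f$ satisfies $f(p) = 0$ together with $f'(x) = \frac{1}{x(1-x)} \geq 4$ on $(0,1)$; moreover $f$ is strictly increasing. By the mean value theorem one therefore has the sign-respecting estimate $|\eta_g(0) - \eta_g(1)| = |f(p_g) - f(p)| \geq 4|p_g - p|$, with $\eta_g(0) - \eta_g(1)$ sharing the sign of $p_g - p$.

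The first assertion $\|\eta_g\|_\infty \to 0$ is immediate from continuity of $\log$ on $[\delta, 1-\delta]$ combined with the hypothesis $\mu_g(a) \to \lambda(a)$ for each $a \in \{0, 1\}$. For \eqref{lyap1}, squaring the previous estimate and summing over $g \in G^{+}$ yields $\sum_{g \in G^{+}} (\eta_g(0)-\eta_g(1))^2 \geq 16 \sum_{g \in G^{+}} (p_g - p)^2 = \infty$ directly from \eqref{condsagain}.

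For the sign-sensitive conclusions I would exploit the decomposition $\mu_g(0) - \mu_g(1) = 2p_g - 1 = 2(p_g - p) + (2p - 1)$. When $p \geq 1/2$ the constant term is nonnegative, giving $2p_g - 1 \geq 2(p_g - p) > 0$ for $g \in G^{+}$; combined with the previous lower bound this yields $(\eta_g(0) - \eta_g(1))(\mu_g(0) - \mu_g(1)) \geq 8(p_g - p)^2$ on $G^{+}$, and \eqref{lyap2plus} follows at once from \eqref{condsagain}.

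The main obstacle is the case $p < 1/2$, where $2p_g - 1$ and $\eta_g(0) - \eta_g(1)$ have \emph{opposite} signs on the cofinite subset of $G^{+}$ where $p_g < 1/2$ (cofinite by $p_g \to p < 1/2$). By further excluding the finitely many $g$ with $p_g - p \geq (1-2p)/4$, one gets a uniform bound $|2p_g - 1| \geq (1-2p)/2 > 0$; combined with the estimate from the first paragraph this produces the \emph{upper} bound $(\eta_g(0) - \eta_g(1))(\mu_g(0) - \mu_g(1)) \leq -2(1-2p)(p_g - p)$. This is only linear in $p_g - p$, but it suffices: since $p_g - p \in [0, 1]$ one has the term-wise inequality $p_g - p \geq (p_g - p)^2$, so $\sum_{g \in G^{+}}(p_g - p) \geq \sum_{g \in G^{+}}(p_g - p)^2 = \infty$, and \eqref{lyap2minus} follows. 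The only subtle point is the bookkeeping of the cofinite restrictions together with the quadratic-to-linear comparison on $[0, 1]$ that salvages the divergence.
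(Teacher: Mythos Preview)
Your proof is correct and follows essentially the same approach as the paper. Both arguments obtain a linear lower bound $\eta_g(0)-\eta_g(1)\geq c\,(p_g-p)$ on $G^{+}$ (you via the mean value theorem for $f(x)=\log\frac{x(1-p)}{(1-x)p}$ with $f'\geq 4$, the paper via the elementary bounds $\tfrac{t}{1+t}\leq\log(1+t)\leq t$), then split $\mu_g(0)-\mu_g(1)=2(p_g-p)+(2p-1)$: when $p\geq 1/2$ the cross term is nonnegative and one gets a square, while for $p<1/2$ one restricts to the cofinite set where $2p_g-1\leq -\eps$ and uses the comparison $p_g-p\geq (p_g-p)^2$ to upgrade the divergent quadratic sum to a divergent linear one. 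Your mean-value-theorem packaging is slightly cleaner, but the structure is identical.
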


\subsection{Proof of Lemma \ref{prfgibbs}}

We choose an enumeration $\{h_1,h_2,\ldots\}$ of the set $ G\setminus \{g_1,g_2,\ldots\}$, and fix a finite set $L\subset\mathbb{N}$ such 
that $K\subset \left\{h_k\right\}_{k\in L}\cup \left\{g_k\right\}_{k\in L}$. For $k \in \mathbb{N}$, we define the map $\tau_k : X \ra X$ by
\[
\tau_k(x)_{g} = 
\left\{
\begin{array}{cc}
x_{h_k} & \textrm{if $g=g_k$} \\
x_{g_k} & \textrm{if $g = h_k$}\\
x_{g}  & \textrm{otherwise}
\end{array}
\right.,
\]
which simply permutes the $g_k$ and $h_k$ coordinates. A calculation, using our assumption that $\mu_{g_k}=\lambda$, shows that 
\begin{align}
\label{rntauk}
\log\left(\frac{d\tau_k^{-1}\mu}{d\mu}\right)(x) &=\log \left(\frac{\mu_{h_k}\left(x_{g_k}\right)\mu_{g_k}\left(x_{h_k}\right) }{\mu_{h_k}\left(x_{h_k}\right)\mu_{g_k}\left(x_{g_k}\right)}\right)\\
&= \log \left(\frac{\mu_{h_k}}{\lambda}\right)\left(x_{h_k}\right)-\log \left(\frac{\mu_{h_k}}{\lambda}\right)\left(x_{g_k}\right)\\
&=\eta_{h_k}\left(x_{h_k}\right)-\eta_{h_k}\left(\tau_k(x)_{h_k}\right)=c_\eta\left(x,\tau_k(x)\right).
\end{align}
For $k \in\mathbb{N}$ we write, 
\[
F_k(x):=\eta_{h_k}\left(x_{h_k}\right)-\eta_{h_k}\left(\tau_k(x)_{h_k}\right)
\]
We note that all $(F_k)$ are independent of each other,
\begin{equation}
\label{propF}
\lim_{k \ra \infty} \|F_k\|_\infty = 0,
\end{equation}
and
\begin{align}
\label{meanF}
\int F_kd\mu&=\left(\left(\mu_{h_k}(0)-\mu_{h_k}(1)\right)+\left(\lambda(0)-\lambda(1)\right)\right)\left(\eta_{h_k}(0)-\eta_{h_k}(1)\right) \\
&\geq \big(\eta_{h_k}(0)-\eta_{h_k}(1)\big)(\mu_{h_k}(0) - \mu_{h_k}(1)\big).
\end{align}
and
\begin{equation}
\label{varF}
\int_X F^2_k \, d\mu \geq \frac{1}{4} \, \big(\eta_{h_k}(0)-\eta_{h_k}(1)\big)^2.
\end{equation}
 Let us fix $\eps > 0$ and choose a finite subset $\mathbb{N}\supset M \supset L$ such that $\|F_{h_k}\|_\infty < \eps/2$ for all $k \notin M$. We enumerate
$\mathbb{N}\setminus M = \{k_1,k_2,\ldots\}$ and set 
\[
Z_n = \{k_1,\ldots,k_n\} \qand S_n = \sum_{k \in Z_n} F_{h_k} \qand A_n = \sum_{k \in Z_n} \int_X F_{h_k} \, d\mu.
\]
Since $\delta \leq \mu_g(0) \leq 1-\delta$ for all $g \in G$, it is not hard to see that there is a constant $C_\delta$ such that
\[
\frac{1}{C_\delta} \sum_{k \in Z_n} \big( \eta_{h_k}(0) - \eta_{h_k}(1) \big)^2 
\leq 
\int_X |S_n - A_n|^2 \, d\mu
\leq
C_\delta \, \sum_{k\in Z_n} \big( \eta_{h_k}(0) - \eta_{h_k}(1) \big)^2, 
\]
for all $n$, and thus the variance $B^2_n := \int_X |S_n - A_n|^2 \, d\mu$ tends to infinity as $n \ra \infty$ by \eqref{lyap1}. 
Hence, by Lyapunov's CLT (see e.g. \cite[Theorem 7.1.2]{C01}), 
\[
\lim_n \mu\Big(\Big\{ x \in X \, : \frac{S_n(x)-A_n}{B_n} \geq r \Big\}\Big) = \frac{1}{\sqrt{2\pi}}\int_{r}^\infty e^{-u^2/2} \, du, 
\]
for all $r \in \bR$. In particular, there exist $n_{+}, n_{-} \geq 1$ such that
\[
\mu\big(\big\{ x \in X \, : \, S_n(x) > A_n \big\}\big) = \mu\big(\big\{ x \in X \, : \, \frac{S_n(x)-A_n}{B_n} > 0 \big\}\big) \geq \frac{1}{3}, \quad \textrm{for all $n \geq n_{+}$}
\]
and 
\[
\mu\big(\big\{ x \in X \, : \, S_n(x) < A_n \big\}\big) = \mu\big(\big\{ x \in X \, : \, \frac{S_n(x)-A_n}{B_n} < 0 \big\}\big) \geq \frac{1}{3}, \quad \textrm{for all $n \geq n_{-}$}.
\]
If \eqref{lyap2plus} holds, then $A_n \ra \infty$, so for any $t \geq 0$, the inequality $A_n \geq t$ holds eventually, and thus we can find 
$N_{+} = N_{+}(t) \geq n_{+}$ such that
\[
\mu\big(\big\{ x \in X \, : \, S_{N_{+}}(x) > t \big\}\big) \geq \frac{1}{3}.
\]
Let us from now on fix $t \geq 0$ and an integer $N_{+} = N_{+}(t)$ as above, and set 
\[
E^{+}_t = \big\{ x \in X \, : \, S_{N_{+}}(x) > t \big\}.
\]
We define $T_{+} : E^{+}_t \ra \{1,\ldots,N_{+}\}$ by
\[
T_{+}(x) = \min\big\{ n \geq 1 \, : \, S_n(x) > t \big\},
\]
and $\phi_{+} : E^{+}_t \ra X$ by
\[
\phi_{+}(x)_g = 
\left\{
\begin{array}{cc}
x_{h_k} & \textrm{if there exists $k\in Z_{T_{+}(x)}$ such that $g=g_k $} \\
x_{g_k} & \textrm{if there exists $k \in  Z_{T_{+}(x)}$ such that $g=h_k$}\\
 x_g & \textrm{otherwise}.
\end{array}
\right..
\]
Clearly, 
\[
\Dom(\phi_{+}) = E^{+}_t \qand \supp(\phi_{+}) \subset G_{N_+} \qand (x,\phi_{+}(x)) \in \cH_\mu, \quad 
\textrm{for all $x \in \Dom(\phi_{+})$}.
\]
We note that by \eqref{rntauk}, and the definition of $T_{+}$,
\[
\log\left(\frac{d\phi_{+}^{-1}\mu}{d\mu}\right)(x)=c_\eta(x,\phi_{+}(x)) = \sum_{k \in Z_{T_{+}(x)}} F_k(x) =  S_{T_{+}(x)}(x) > t,
\]
and since $\|F_g\|_\infty < \eps/2$ for all $g \notin M$, we also have
\[
S_{T_{+}(x)}(x) = S_{T_{+}(x)-1}(x) + F_{g_{T_{+}(x)}}(x) < t + \eps,
\]
and thus $|c_\eta(x,\phi_{+}(x)) - r| < \eps$. \\

It remains to show that $\phi_{+} \in [\cH_\mu]$, that is to say, $\phi_{+}$ is one-to-one on $E^{+}_t$. 
Pick two different points $x, x' \in E^{+}_t$. If $T_{+}(x) = T_{+}(x')$, then clearly $\phi_{+}(x)$ and $\phi_{+}(x')$ are distinct, so we may without loss of generality assume that $T_{+}(x) < T_{+}(x')$, whence
\[
S_{T_{+}(x)}(x) > S_{T_{+}(x)}(x'),
\]
Since for all $N\in\mathbb{N}$, $S_{N}\left(\phi_+(x)\right)=-S_N(x)$ we see that 
\[
S_{T_{+}(x)}(\phi_+(x)) < S_{T_{+}(x)}(\phi_+(x'))
\]
which shows that $\phi_{+}(x) \neq \phi_{+}(x')$. \\

If instead \eqref{lyap2minus} holds, then $A_n \ra -\infty$, then the argument above goes through for \emph{negative} $t$ with no essential changes.
%

\subsection{Proof of Lemma \ref{etaisgood}}

Set $\eta_g = \log \frac{\mu_g}{\lambda}$. We recall the standard inequalities:
\[
\frac{t}{1+t} \leq \log(1+t) \leq t, \quad \textrm{for all $t > -1$}.
\]
In particular, applied to $t = \frac{\mu_g(\cdot)}{\lambda(\cdot)} - 1$, these yield
\[
\frac{\mu_g - \lambda}{\mu_g} \leq \eta_g \leq \frac{\mu_g-\lambda}{\lambda}, \quad \textrm{on $\{0,1\}$},
\]
and thus, by \eqref{deltabnd}, 
\[
\eta_g(0) - \eta_g(1) \geq \Big( \frac{1}{\mu_g(0)} + \frac{1}{\lambda(1)}\Big) \, \big(\mu_g(0) - \lambda(0)\big) 
\geq \Big( \frac{1}{1-\delta} + \frac{1}{\lambda(1)}\Big) \, \big(\mu_g(0) - \lambda(0)\big),
\]
for all $g \in G$. In particular, $\eta_g(0) > \eta_g(1)$ for all $g \in G^{+}$, and by the second condition in \eqref{condsagain}, 
we see that \eqref{lyap1} holds. \\

Set
\[
I = \sum_{g \in G^{+}} \big(\eta_g(0) - \eta_g(1)\big)\big(\mu_g(0) - \mu_g(1)\big) = 2 \, \sum_{g \in G^{+}} \big(\eta_g(0) - \eta_g(1)\big)\big(\mu_g(0) - \frac{1}{2}\big).
\]
If $\lambda(0) \geq 1/2$, then
\[
\mu_g(0) - \frac{1}{2} = \mu_g(0) - \lambda(0) + \lambda(0) - \frac{1}{2} \geq \mu_g(0) - \lambda(0),
\]
and thus, by the second condition in \eqref{condsagain}, 
\[
I \geq 2 \, \Big( \frac{1}{1-\delta} + \frac{1}{\lambda(1)}\Big) \, \sum_{g \in G^{+}} \big(\mu_g(0) - \lambda(0)\big)^2 = \infty.
\]
If $\lambda(0) < 1/2$, then there exists $\eps > 0$ such that the set 
\[
F = \big\{ g \in G^{+} \, : \, \mu_g(0) - \frac{1}{2} \geq -\eps \big\}
\]
is finite. The second condition in \eqref{condsagain}, combined with the fact that $\mu_g(0) - \lambda(0) < 1$ for all $g \in G^{+}$, shows that we have $\sum_{g \in G^{+} \setminus F} \big( \mu_g(0) - \lambda(0) \big) = \infty$, whence
\[
\sum_{g \in G^{+} \setminus F} \big(\eta_g(0) - \eta_g(1)\big)\big(\mu_g(0) - \frac{1}{2}\big) < - \eps \, \Big( \frac{1}{1-\delta} + \frac{1}{\lambda(1)}\Big) \,\sum_{g \in G^{+} \setminus F}  \big( \mu_g(0) - \lambda(0) \big) = -\infty.
\]
We conclude that $I = -\infty$.

\subsection{Proof of Lemma \ref{lemma_compare_b_and_c}}

We recall the definitions of the cocycles $r_\mu$ and $c$ from Section \ref{proofProp}. In particular, by \eqref{eq_rn}, we know that for all $x \in X'$, 
\[
r_\mu(g,x) = \sum_{h \in G} \big( \log \mu_h(x_h) - \log \mu_{gh}(x_h)\big), \quad \textrm{for all $x \in X'$},
\]
and for all $(x,x') \in \cH_\mu$,
\[
c(x,x') = \sum_{g \in G} \Big( \log \frac{\mu_g(x_g)}{\lambda(x_g)} - \log \frac{\mu_g(x'_g)}{\lambda(x'_g)} \Big).
\]
Fix $\eps > 0$ and $\phi \in [\cH_\nu]$ with finite support and set
\[
L = \Big\{ g \in G  \, : \, \max_{a \in \{0,1\}}\big| \log \frac{\mu_g(a)}{\lambda(a)} \big| \geq \frac{\eps}{2|\supp(\phi)|} \Big\}.
\]
Since $\delta \leq \mu_g(0) \leq 1-\delta$ for all $g \in G$ and $\mu_g(0) \ra \lambda(0)$ as $g \ra \infty$, the set $L$ is finite. We note that for every $g \in G$ and $x \in \Dom(\phi)$,
\begin{eqnarray*}
r_\mu(g,x) - r_\mu(g,\phi(x)) 
&=& 
\sum_{h \in \supp(\phi)} 
\Big( 
\log \frac{\mu_h(x_h)}{\lambda(x_h)} - \log \frac{\mu_h(\phi(x)_h)}{\lambda(\phi(x)_h)}\big) 
\Big) + \\
&+& 
\sum_{h \in \supp(\phi)} \log \frac{\lambda(x_h) \cdot \mu_{gh}(\phi(x)_h)}{\mu_{gh}(x_h) \cdot \lambda(\phi(x)_h)} \\
&=&
c(x,\phi(x)) + \sum_{h \in \supp(\phi)} \log \frac{\lambda(x_h) \cdot \mu_{gh}(\phi(x)_h)}{\mu_{gh}(x_h) \cdot \lambda(\phi(x)_h)}.
\end{eqnarray*}
If $L \cap g(\supp(\phi)) = \emptyset$, then the absolute value of the last sum is bounded by $\eps$, which finishes
the proof.

\section{Proof of Lemma \ref{lemma_cons}}

We retain the notation from Section \ref{proofProp}. It is not hard to see that $G \acts (X,\mu)$ essentially free. Since we have assumed that it is also conservative, so is the diagonal action $G \acts (X \times Y,\mu \otimes \nu)$. Hence Lemma \ref{lemma_cons} follows from the following general result, which is surely known to experts, but we include it for completeness.

\begin{lemma}
If $G \acts (Z,\xi)$ is conservative and essentially free, then, for every $\eps > 0$, Borel set $C \subset Z$ and \emph{finite} subset $F \subset G$, 
\begin{equation}
\label{eq_conservativity}
\xi\big(C \cap \Big( \bigcup_{g \notin F} g^{-1}C \cap \big\{ |r_\xi(g,\cdot)| < \eps \big\}\Big) \big) = \xi(C).
\end{equation}
\end{lemma}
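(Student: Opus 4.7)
The plan is to lift the problem to the Maharam extension of $G \acts (Z,\xi)$, apply the classical Halmos recurrence theorem there, and then use Fubini to push the conclusion back down to $(Z,\xi)$.

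More precisely, let $(\tilde Z, \tilde \xi) := (Z \times \bR, \xi \otimes e^t\,dt)$ and consider the Maharam action $g(z,t) = (gz, t + r_\xi(g,z))$, which preserves the infinite $\sigma$-finite measure $\tilde \xi$. The first step is to invoke Maharam's theorem: since the base action $G \acts (Z,\xi)$ is non-singular and conservative, the Maharam extension $G \acts (\tilde Z, \tilde \xi)$ is also conservative. Because this action preserves the $\sigma$-finite measure $\tilde \xi$, we may apply Halmos' recurrence theorem to any set of positive finite $\tilde \xi$-measure.

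Fix $\eps > 0$, a Borel set $C \subset Z$ with $\xi(C) > 0$, and a finite set $F \subset G$. Set $\tilde C := C \times (-\eps/2,\eps/2)$, which satisfies $\tilde\xi(\tilde C) = \xi(C)(e^{\eps/2}-e^{-\eps/2}) \in (0,\infty)$. By Halmos' theorem, for $\tilde \xi$-almost every $(z,t) \in \tilde C$ the set $\{g \in G : g(z,t) \in \tilde C\}$ is infinite; in particular, for such $(z,t)$ there exists $g \notin F$ with $gz \in C$ and $|t + r_\xi(g,z)| < \eps/2$. When moreover $t \in (-\eps/2,\eps/2)$, the triangle inequality yields $|r_\xi(g,z)| < \eps$.

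To extract the statement for $(Z,\xi)$, let
\[
E := \Big\{ z \in C \, : \, \exists \, g \notin F, \enskip gz \in C \ \text{and}\  |r_\xi(g,z)| < \eps \Big\},
\]
and suppose for contradiction that $\xi(C \setminus E) > 0$. Then $\tilde\xi((C \setminus E) \times (-\eps/2,\eps/2)) > 0$, and by the argument above almost every point in this set has some $g \notin F$ certifying membership in $E$, a contradiction. Hence $\xi(E) = \xi(C)$, which is exactly \eqref{eq_conservativity}. The only non-routine input is conservativity of the Maharam extension, which is Maharam's classical result and is the crux of the argument; everything else is a direct recurrence/Fubini manipulation.
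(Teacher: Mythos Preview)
Your proof is correct and follows essentially the same route as the paper: lift to the Maharam extension, invoke conservativity of the extension (Maharam/Schmidt), apply Halmos recurrence to $C \times (-\delta,\delta)$, and push the conclusion back down to $Z$. The only cosmetic difference is that the paper integrates over the fiber and introduces an auxiliary function $\rho_\delta(s)=\int_{-\delta}^{\delta}\chi_{(-\delta,\delta)}(s+t)e^{t}\,dt$ to pass from $|t+r_\xi(g,z)|<\delta$ to $|r_\xi(g,z)|<\eps$, whereas you obtain this directly by the triangle inequality with $\delta=\eps/2$; your contradiction argument with the set $E$ is likewise just a compressed version of the paper's Fubini step. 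One small omission: the paper explicitly notes that essential freeness passes to the Maharam extension before citing Halmos recurrence, and you use the ``infinitely many returns'' form of Halmos without comment---worth a word, but not a gap.
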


\begin{proof}
Since $G \acts (Z,\xi)$ is conservative, it follows from \cite[Theorem 4.2 and Theorem 5.5]{S77} that its Maharam extension
$G \acts (\widetilde{Z},\widetilde{\xi})$ is conservative as well. Since $G \acts (Z,\xi)$ is essentially free, so is $G \acts (\widetilde{Z},\widetilde{\xi})$. Hence by Halmos recurrence theorem \cite[Proposition 1.6.2]{A97}, for every measurable set $\widetilde{C} \subset \widetilde{Z}$, we have $\widetilde{D} = \widetilde{C}$ modulo $\widetilde{\xi}$-null sets, where
\begin{equation}
\label{div1}
\widetilde{D} = \big\{ (z,t) \in \widetilde{C} \, : \, \sum_{g \in G} \chi_{\widetilde{C}}(gz,t+r_\xi(g,z)) = \infty \big\}.
\end{equation}
Let $\delta > 0$ and fix a Borel set $C \subset Z$. If we define $\widetilde{C} = C \times (-\delta,\delta)$ and let $\widetilde{D}$ denote
the corresponding set defined in \eqref{div1}, then Fubini's Theorem tells us that there is a $\xi$-conull subset $C' \subset C$ such
that 
\[
\widetilde{D}_z := \big\{t \in \bR \, : \, (z,t) \in \widetilde{D} \big\} = (-\delta,\delta), \quad \textrm{modulo $\xi$-null sets},
\]
for all $z \in C'$. In particular, for all $z \in C'$, we have
\[
\int_{\widetilde{D}_z}  \sum_{g \in G} \chi_{\widetilde{C}}(gz,t+r_\xi(g,z)) \, e^{t} \, dt = \sum_{g \in G} \chi_C(gz) \rho_\delta(r_\xi(g,z)) = \infty,
\]
where
\[
\rho_\delta(s) = \int_{-\delta}^\delta \chi_{(-\delta,\delta)}(s+t) e^{t} \, dt, \quad \textrm{for $s \in \bR$}.
\]
Let us now fix $\eps > 0$ and choose $\delta > 0$ so small so that $\rho_\delta \leq 2 \chi_{(-\eps,\eps)}$. Then, with the notations 
above,
\[
\sum_{g \in G} \chi_C(gz) \chi_{(-\eps,\eps)}(r_\xi(g,z)) = \infty, \quad \textrm{for all $z \in C'$},
\]
and thus, for every finite subset $F \subset G$,
\[
\sum_{g \notin F} \chi_C(gz) \chi_{(-\eps,\eps)}(r_\xi(g,z)) = \infty, \quad \textrm{for all $z \in C'$},
\]
We conclude that for every finite set $F \subset G$,
\[
C \cap \Big( \bigcup_{g \notin F} g^{-1}C \cap \big\{ |r_\xi(g,\cdot)| < \eps \big\} \Big) \supset \big\{ z \in C \, : \, \sum_{g \notin F} \chi_C(gz) \chi_{(-\eps,\eps)}(r_\xi(g,z)) = \infty\big\} \supset C'.
\]
Since $\xi(C') = \xi(C)$, we are done.
\end{proof}

\appendix

\section{On the role of amenability}

\begin{proposition}
\label{nonamen_nonexist}
Let $G$ be a finitely generated \emph{non-amenable} group and suppose that $G \acts (\{0,1\}^G,\mu)$ is a non-singular Bernoulli action with the property that there exist $\delta > 0$ such that $\delta \leq \mu_g(0) \leq 1 - \delta$ for all $g \in G$ and a probability measure $\lambda$ on $\{0,1\}$ such that 
\[
\lim_{g \ra \infty} \mu_g(0) = \lambda(0).
\]
Then $\mu$ is equivalent to the $G$-invariant probability measure $\prod_{g} \lambda$. In particular, it is not of type $III$.
\end{proposition}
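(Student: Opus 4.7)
The plan is to show that $f(g) := \mu_g(0) - \lambda(0)$ lies in $\ell^2(G)$. Once this is established, the bounds $\delta \leq \mu_g(0) \leq 1 - \delta$ imply that the Hellinger distances satisfy $H^2(\mu_g, \lambda) \asymp f(g)^2$, so $\sum_g H^2(\mu_g, \lambda) < \infty$ and Kakutani's theorem gives that $\mu$ is equivalent to the $G$-invariant probability measure $\prod_g \lambda$; in particular, $\mu$ cannot be of type $III$.

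Two ingredients go into establishing $f \in \ell^2(G)$. First, Kakutani's non-singularity criterion \eqref{kakutani} is exactly the statement that, writing $L_g h(k) := h(g^{-1}k)$ for left translation in the regular representation, one has $\|L_g f - f\|_{\ell^2(G)} < \infty$ for every $g \in G$ (after a change of variables $k = gk'$). Second, non-amenability of the finitely generated group $G$ gives, via the Kesten--Hulanicki theorem, a finite symmetric generating set $S \subset G$ and a constant $c > 0$ such that
\[
\sum_{s \in S} \|L_s h - h\|_{\ell^2(G)}^2 \geq c \, \|h\|_{\ell^2(G)}^2 \qquad \textrm{for all } h \in \ell^2(G).
\]

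The main obstacle is that this spectral gap only applies to $\ell^2$-functions, while $f$ is a priori only bounded. A naive truncation by balls $B_n$ fails because in a non-amenable group the boundary of any ball is comparable in size to the ball itself, so the Dirichlet energy of the truncation cannot be controlled by that of $f$. The key idea is instead a \emph{monotone Lipschitz level-set truncation}: since $|f|$ vanishes at infinity by the assumption $\mu_g(0) \to \lambda(0)$, the functions $h_k := (|f| - 1/k)_+$ have finite support (hence lie in $\ell^2(G)$) and increase pointwise to $|f|$. Because $x \mapsto (x-1/k)_+$ is $1$-Lipschitz and $\bigl||a|-|b|\bigr| \leq |a-b|$, for every $s \in S$ one obtains $\|L_s h_k - h_k\|_2 \leq \|L_s f - f\|_2$ pointwise-by-pointwise. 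Applying the spectral gap to each $h_k$ and then passing to the limit by monotone convergence gives
\[
c \, \bigl\||f|\bigr\|_{\ell^2(G)}^2 \;\leq\; \sum_{s \in S} \|L_s f - f\|_{\ell^2(G)}^2 \;<\; \infty,
\]
so $f \in \ell^2(G)$ and the proof is complete.
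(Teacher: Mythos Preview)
Your proof is correct and takes a genuinely different route from the paper's. Both arguments reduce to showing that $f(g)=\mu_g(0)-\lambda(0)$ lies in $\ell^2(G)$, using as input that $L_s f - f \in \ell^2(G)$ for each $s$ in a finite symmetric generating set $S$ (from Kakutani's criterion) together with $f(g)\to 0$ at infinity.

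The paper invokes a cohomological decomposition due to Bekka--Valette: any function on a non-amenable group with $\ell^2$ increments splits as $u+v$ with $v\in\ell^2(G)$ and $u$ harmonic for the simple random walk on $S$. Since $v\to 0$ at infinity, so does $u$; then a bounded-harmonic-function argument via the random walk (which goes to infinity a.s.\ on a non-amenable group) forces $u\equiv\lambda(0)$, whence $f=v\in\ell^2(G)$.

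You instead use Kesten's spectral gap directly: $\sum_{s\in S}\|L_s h-h\|_2^2 \ge c\|h\|_2^2$ for $h\in\ell^2(G)$. The obstacle that $f$ is not a priori in $\ell^2$ is handled by your Lipschitz level-set truncation $h_k=(|f|-1/k)_+$, which has finite support because $|f|\to 0$ at infinity; the $1$-Lipschitz property of $x\mapsto(x-1/k)_+$ together with $\bigl||a|-|b|\bigr|\le|a-b|$ gives $\|L_s h_k-h_k\|_2\le\|L_s f-f\|_2$, and monotone convergence finishes. This is more elementary and self-contained than the paper's argument: it avoids both the Bekka--Valette Hodge-type decomposition and the probabilistic step, relying only on the spectral characterisation of non-amenability. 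The paper's approach, on the other hand, makes the structural reason (vanishing of the relevant reduced $\ell^2$-cohomology) more transparent and connects naturally with the cohomological framing of the Vaes--Wahl conjecture discussed in the introduction.
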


\begin{remark}
The proof below can be adapted to also deal with infinitely generated groups, but providing references in this generality 
would become harder. 
\end{remark}

Since $G \acts (\{0,1\}^G,\mu)$ is non-singular and $\delta \leq \mu_g(0) \leq 1 - \delta$ for all $g \in G$, Kakutani's criterion \ref{kakutani} shows that 
\[
\sum_{h \in G} \big(\mu_{gh}(0) - \mu_h(0)\big)^2 < \infty, \quad \textrm{for all $g \in G$}.
\]
Let $S$ be a finite and symmetric generating set of $G$. Since $G$ is non-amenable, we can, by \cite[Corollary 3]{BV97}, write 
\[
\mu_g(0) = u(g) + v(g), \quad \textrm{for all $g \in G$},
\]
where $v \in \ell^2(G)$ and 
\begin{equation}
\label{harm}
\frac{1}{|S|} \sum_{s \in S} u(gh) = u(g), \quad \textrm{for all $g \in G$}.
\end{equation}
Since $v \in \ell^2(G)$ and $G$ is infinite, we must have $\lim_{g} v(g) = 0$, whence $\lim_{g} u(g) = \lambda(0)$. There are many ways to see why $u$ has to be a constant (and thus equal to $\lambda(0)$). For instance, let $(z_n)$ be the simple random walk on $G$ with
steps in $S$. Then, $z_n \ra \infty$ almost surely (since $G$ is non-amenable), and thus $u(gz_n) \ra \lambda(0)$ for all $g \in G$. On the other hand, by \eqref{harm} and the dominated convergence theorem, 
\[
u(g) = \lim_{n} \bE[u(gz_n)] = \lambda(0), \quad \textrm{for all $g \in G$}.
\] 
We conclude that $\mu_g(0)-\lambda(0) \in \ell^2(G)$, and thus $\mu$ and $\prod_{g} \lambda$ are equivalent by \cite[Corollary 1]{K48}.

%

\end{document}